\theoremstyle{plain}
\newtheorem{Pocz}{Poczatek}[section]
\newtheorem{Proposition}[Pocz]{Proposition}
\newtheorem{Theorem}[Pocz]{Theorem}
\newtheorem{Question}[Pocz]{Question}
\theoremstyle{definition}
\newtheorem{Definition}[Pocz]{Definition}
\theoremstyle{remark}
\newtheorem{Remark}[Pocz]{Remark}
\def\RR{{\mathbb R}}
\def\NN{{\mathbb N}}
\numberwithin{equation}{section}
\title[Discreteness and Large Scale Surjections
]%
  {Discreteness and Large Scale Surjections}
\author{Kyle ~Austin}
\address{University of Tennessee, Knoxville, USA}
\email{austin@math.utk.edu}
\date{ \today
}
\keywords{}
\subjclass[2000]{Primary 54F45; Secondary 55M10}
\begin{document}
\maketitle

\begin{abstract}
We study the concept of coarse disjointness and large scale $n$-to-$1$ functions. As a byproduct, we obtain an Ostrand-type characterization of asymptotic dimension for coarse structures. It is shown that properties like finite asymptotic dimension, coarse finitism, large scale weak paracompactness, ect. are all invariants of coarsely $n$-to-$1$ functions. Metrizability of large scale structures is also investigated.
\end{abstract}

\section{Introduction}

Disjointness and discreteness have long been useful topological concepts. A few examples are Ostrand's characterization of dimension, Bing's Metrization Theorem, and using discreteness to find partitions of unity \cite{AD}. Dimension theorists and geometric group theorists have been effectively using coarse disjointness in the large scale category for quite some time. J. Dydak in \cite{DyCAD} explores the concept of disjointness in the large scale category. He shows that certain classes of large scale spaces can be characterized via coarse disjointness. Roughly speaking, the first half of this paper is devoted to the investigation of disjointness in large scale structures. The highlight of disjointness results in this paper is an Ostrand dimension characterization for large scale structures.

T. Miyata and \v Z. Virk introduced coarse analogs of the Hurewicz dimension raising theorems in \cite{MV}. There they showed that finite asymptotic dimension was preserved by functions which have a property that is coarsely analogous to $n$-to-$1$ functions which they called $(B)_n$. In particular, they proved that for  a metric space $X$, the property that $asdim(X) \leq n$ is equivalent to the existence of a $(B)_n$ function $f:Y\to X$ from a space $Y$ with $asdim(Y) = 0$. Their results show that there are analogues to the classical $n$-to-$1$ maps from the Cantor set onto an $n$ dimensional compact space. We aim to study these functions that satisfy the $(B)_n$ property which we will call coarsely $n$-to-$1$ functions.

The large scale analogue of a surjection in coarse geometry is a function that becomes indistinguishable from a  surjection when viewed from ever increasing distances. More precisely, a function $f:X \to Y$ of metric spaces $X$ and $Y$ is coarsely surjective if the image of $X$ is an $R$-net in $Y$ for some $R>0$. In order to define what a coarsely $n$-to-$1$ map of metric spaces is, we need to generalize the notion of a point. The points of a metric space $X$ are any collection of subsets of $X$ which become indistinguishable from points when viewed from ever increasing distances. So points of $X$ look like a uniformly bounded family of subsets of $X$. A function $f:X \to Y$ of metric spaces is \textbf{coarsely} $n$-to-$1$ if for every $R > 0$ there exists $S>0$ such that the point inverse of any set of diameter at most $R$ can be covered by at most $n$ sets of diameter at most $S$. 

The main resluts of this paper are as follows:

\begin{Theorem}[Ostrand Characterization of Dimension] \nonumber
Let $(X,\mathcal{LSS}_X)$ be a  large scale structure and $\mathcal{U}$ a uniformly bounded cover of $X$. The following are equivalent:

1) There exists a uniformly bounded cover $\mathcal{V} = \bigcup_{i = 1}^{n+1} \mathcal{V}_i$ where $st(\mathcal{V}_i,\mathcal{U})$ is a disjoint collection for each $i = 1,2,\hdots , n$.

2) There exists a uniformly bounded cover $\mathcal{V}$ which coarsens $\mathcal{U}$ with multiplicity at most $n+1$.

\end{Theorem}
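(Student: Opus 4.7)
The plan is to prove the two implications separately. Direction $(1)\Rightarrow (2)$ is straightforward; the substance lies in $(2)\Rightarrow (1)$, where a multiplicity-bounded cover must be split into subfamilies that remain disjoint after enlargement by $\mathcal{U}$.

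For $(1)\Rightarrow (2)$ the candidate refinement is
\[
\mathcal{V}' \;:=\; \bigcup_{i=1}^{n+1} \mathrm{st}(\mathcal{V}_i,\mathcal{U}).
\]
Uniform boundedness is immediate from the large scale axioms, which close uniformly bounded covers under taking stars against other uniformly bounded covers. Any $U\in\mathcal{U}$ meets some $V\in\mathcal{V}$; if $V\in\mathcal{V}_i$, then $U\subseteq\mathrm{st}(V,\mathcal{U})\in\mathcal{V}'$, so $\mathcal{V}'$ coarsens $\mathcal{U}$ and covers $X$. Disjointness of each $\mathrm{st}(\mathcal{V}_i,\mathcal{U})$ limits every point to at most one element per index, producing a total multiplicity of at most $n+1$.

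For $(2)\Rightarrow (1)$ I will induct on $n$. In the base case $n=0$, the cover $\mathcal{V}$ is pairwise disjoint and coarsens $\mathcal{U}$; any $U$ meeting $V\in\mathcal{V}$ lies in some $V'\in\mathcal{V}$ by coarsening, and an intersection point of $U$ and $V$ then sits in $V\cap V'$, forcing $V=V'$. Hence $\mathrm{st}(V,\mathcal{U})\subseteq V$ for every $V$, so distinct elements of $\mathcal{V}$ have disjoint $\mathcal{U}$-stars and $\mathcal{V}_1:=\mathcal{V}$ suffices.

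For the inductive step, the plan is to extract a maximal $\mathcal{U}$-star disjoint subfamily $\mathcal{V}_{n+1}\subseteq\mathcal{V}$ by Zorn's Lemma, remove it, and apply the inductive hypothesis to a residual cover on $X\setminus\mathrm{st}(\bigcup\mathcal{V}_{n+1},\mathcal{U})$. The main obstacle will be verifying that this residual cover has multiplicity at most $n$ and still coarsens the appropriate restriction of $\mathcal{U}$: maximality of $\mathcal{V}_{n+1}$ must force at least one element of $T(x):=\{V\in\mathcal{V}:x\in V\}$ into $\mathcal{V}_{n+1}$ at every point $x$ of maximal order $n+1$. I expect this combinatorial selection step—balancing $\mathcal{U}$-star disjointness against the multiplicity profile—to require a secondary rank-coloring or shrinking argument inside the induction.
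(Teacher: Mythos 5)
Your $(1)\Rightarrow(2)$ direction is correct and is essentially the paper's own argument: pass to $st(\mathcal{V},\mathcal{U})$, observe it coarsens $\mathcal{U}$, and count at most one element per colour class. (You implicitly read the hypothesis as requiring $st(\mathcal{V}_i,\mathcal{U})$ to be disjoint for all $i=1,\dots,n+1$; that is the only reading under which either your count or the paper's gives multiplicity $n+1$, so the bound $i\le n$ in the statement should be treated as a typo.) Your base case $n=0$ of the converse is also fine.

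The $(2)\Rightarrow(1)$ direction is where the content of the theorem lives, and your plan has a genuine gap exactly at the step you flag. Extracting a maximal $\mathcal{U}$-star-disjoint subfamily $\mathcal{V}_{n+1}\subseteq\mathcal{V}$ by Zorn's Lemma does not lower the multiplicity of the residue: if $x$ lies in $n+1$ elements of $\mathcal{V}$ and $x\notin st\bigl(\bigcup\mathcal{V}_{n+1},\mathcal{U}\bigr)$, then none of those $n+1$ elements can belong to $\mathcal{V}_{n+1}$ (each contains $x$), so all of them survive into the residual family and its multiplicity at $x$ is still $n+1$. Maximality only guarantees that each discarded $V$ has a star-intersection with some member of $\mathcal{V}_{n+1}$ \emph{somewhere}, which says nothing about the local picture at $x$; so the induction does not close, and the ``secondary rank-coloring or shrinking argument'' you defer is not a technicality but the entire theorem. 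For comparison, the paper takes a completely different route: it applies (2) repeatedly to the iterated stars $st(\mathcal{U}_{k-1},\mathcal{U}_{k-2})$ (so in effect it uses the ``for every uniformly bounded cover'' version of (2), not the single-cover version as literally stated), producing a countable nested family $\{\mathcal{U}_i\}$ that is a large scale basis for a metrizable structure of asymptotic dimension at most $n$ sitting inside $\mathcal{LSS}_X$, and then quotes the metric-space Ostrand theorem of Bell--Dranishnikov to obtain the decomposition $\mathcal{V}=\bigcup_{i}\mathcal{V}_i$, which is automatically uniformly bounded in the original structure. If you want to complete a direct combinatorial proof instead, you would need the Bell--Dranishnikov shrinking construction, which stratifies each $V\in\mathcal{V}$ by the local order of the cover near a point and colours by that depth, rather than a greedy maximal extraction.
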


\begin{Theorem}
Let $X$ and $Y$ be coarse structures and $f:X\to Y$ coarse and coarsely $n$-to-$1$ and satisfies the following additional property:

($*$) For every uniformly bounded cover $\mathcal{U}$ there is a uniformly bounded cover $V$ of $X$ such that for each element $U \in \mathcal{U}$ there exists $U_1,U_2,\hdots,U_n \in \mathcal{U}$ such that $st^n(f(U_i),f(\mathcal{V})) \supset V$ for each $1\leq i \leq n$. 

If $X$ then $Y$ is metrizable. 
\end{Theorem}

\begin{Theorem}
Let $f:X\to Y$  be coarse and coarsely $n$-to-$1$ between general large scale structures $X$ and $Y$. $X$ has finite asymptotic dimension if and only if $Y$ has finite asymptotic dimension. 
\end{Theorem}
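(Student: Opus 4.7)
The plan is to use the Ostrand characterization stated as the first theorem of the introduction on whichever side has finite asymptotic dimension, and transfer to the other side via the coarsely $n$-to-$1$ hypothesis combined with a thickening of the image (or preimage). I interpret coarsely $n$-to-$1$ as a way to pull back uniformly bounded information on $Y$ to uniformly bounded information on $X$ at the cost of a factor of $n$. For the theorem to be non-vacuous I work under the (implicit) assumption that $f$ is coarsely surjective, as otherwise $Y$ could contain arbitrarily large regions disjoint from $\mathrm{im}\,f$.

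For the implication $\asdim X \leq m \Rightarrow \asdim Y < \infty$, fix a scale $R$ on $Y$. Apply the coarsely $n$-to-$1$ property at scale $4R$ to construct a uniformly bounded family $\tilde{\mathcal{U}}$ on $X$ such that for every subset $B \subseteq Y$ of diameter at most $4R$, the preimage $f^{-1}(B)$ is covered by at most $n$ elements of $\tilde{\mathcal{U}}$. Apply the Ostrand characterization to $\tilde{\mathcal{U}}$ on $X$ to obtain a uniformly bounded cover $\mathcal{W} = \bigcup_{i=0}^m \mathcal{W}_i$ of $X$ coarsening $\tilde{\mathcal{U}}$, with each $\mathcal{W}_i$ being $\tilde{\mathcal{U}}$-star-disjoint. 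Now define the cover $\mathcal{V} = \{V_W : W \in \mathcal{W}\}$ of $Y$ by setting $V_W$ to be the $2R$-thickening of $f(W)$; these sets are uniformly bounded since $f$ is coarse and each $W$ is uniformly bounded. The cover $\mathcal{V}$ has $R$-Lebesgue property by coarse surjectivity plus the thickening: given $y \in Y$, pick $W \in \mathcal{W}$ whose image $f(W)$ meets the $R$-ball around $y$, and the $R$-ball then fits inside $V_W$.

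The key multiplicity bound is that $\mathcal{V}$ has multiplicity at most $n(m+1)$. For $y \in Y$, the condition $y \in V_W$ is equivalent to $W \cap f^{-1}(B(y, 2R)) \neq \emptyset$; since $B(y, 2R)$ has diameter at most $4R$, its preimage under $f$ is covered by at most $n$ elements of $\tilde{\mathcal{U}}$; by $\tilde{\mathcal{U}}$-star-disjointness each such $\tilde{\mathcal{U}}$-element meets at most one $W$ in each $\mathcal{W}_i$, giving at most $n(m+1)$ members $W$ with $y \in V_W$. This yields $\asdim Y \leq n(m+1) - 1$. The converse implication $\asdim Y \leq d \Rightarrow \asdim X < \infty$ is entirely symmetric: apply the Ostrand characterization on $Y$, pull the resulting Ostrand cover back through $f$ using coarsely $n$-to-$1$, and thicken the sheets on $X$ by the scale of the original cover on $X$ to produce a uniformly bounded cover of $X$ with multiplicity at most $n(d+1)$.

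The main obstacle I anticipate is precisely this multiplicity count after thickening; this is why $\tilde{\mathcal{U}}$ must be constructed at a doubled scale (sets of diameter $4R$ rather than $R$), so that the Ostrand star-disjointness on $X$ and the coarsely $n$-to-$1$ sheet decomposition on $Y$ align at exactly the scale needed for the thickened cover $\mathcal{V}$ to retain bounded multiplicity.
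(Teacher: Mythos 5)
Your proposal follows essentially the same route as the paper's proof: apply the Ostrand characterization on whichever side has finite asymptotic dimension, transfer the $\mathcal{W}$-disjoint families through $f$ via the coarsely $n$-to-$1$ property, recover a coarsening by starring (your ``thickening''), and bound the multiplicity by $n$ times the number of families. The only real differences are cosmetic or minor strengthenings: you phrase everything in metric terms (balls, diameters) where the paper works with stars against uniformly bounded covers of a general large scale structure, and your doubling of the scale before invoking the coarsely $n$-to-$1$ property (together with the explicit coarse surjectivity caveat) makes the multiplicity and covering counts slightly more careful than the paper's.
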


\begin{Theorem}
Let $f:X\to Y$  be coarse and coarsely $n$-to-$1$. $X$ is large scale finitistic if and only if $Y$ is large scale finitistic. 
\end{Theorem}

\begin{Theorem}
Let $X$ and $Y$ be spaces and $f:X \to Y$ coarse and coarsely finite-to-$1$. If $X$ is large scale weakly paracompact then so is $Y$.
\end{Theorem}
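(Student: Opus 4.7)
The plan is to pull the cover $\mathcal{U}$ back to $X$, apply the weak paracompactness hypothesis there, and push the resulting cover forward to $Y$.

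Let $\mathcal{U}$ be a uniformly bounded cover of $Y$. For each $U \in \mathcal{U}$, the coarsely finite-to-$1$ hypothesis (applied at the uniform diameter bound of $\mathcal{U}$) yields a finite family $A_1^U, \dots, A_{n_U}^U$ of uniformly bounded subsets of $X$ covering $f^{-1}(U)$; assembling them gives a uniformly bounded cover $\mathcal{U}'$ of $X$ refining $f^{-1}(\mathcal{U})$. Applying large scale weak paracompactness of $X$ to $\mathcal{U}'$ produces a uniformly bounded refinement $\mathcal{V}$ of $\mathcal{U}'$ of finite multiplicity. Set $\mathcal{W} := \{f(V) : V \in \mathcal{V}\}$. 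Because $f$ is coarse, $\mathcal{W}$ is uniformly bounded; because every $V \in \mathcal{V}$ sits inside some $A_i^U \subseteq f^{-1}(U)$, we have $f(V) \subseteq U$, so $\mathcal{W}$ refines $\mathcal{U}$.

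The crux is to bound the multiplicity of $\mathcal{W}$. Given $y \in Y$, any $V \in \mathcal{V}$ with $y \in f(V)$ must meet $f^{-1}(\{y\})$. Applying coarsely finite-to-$1$ to the singleton $\{y\}$ covers $f^{-1}(\{y\})$ by finitely many (say $m$) sets $B_1, \dots, B_m$ of uniformly bounded diameter, so every such $V$ meets some $B_j$. Thus bounding the multiplicity of $\mathcal{W}$ at $y$ reduces to bounding, for each fixed $j$, the number of elements of $\mathcal{V}$ meeting $B_j$.

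This last reduction is the main obstacle, since pointwise multiplicity of $\mathcal{V}$ alone does not control the number of $V$'s meeting a bounded set. I expect to close the argument by strengthening $\mathcal{V}$ to a finite disjoint union $\mathcal{V}_1 \sqcup \cdots \sqcup \mathcal{V}_N$ of coarsely disjoint subfamilies whose disjointness parameter exceeds the common diameter of the $B_j$'s; such decompositions are exactly what the coarse disjointness machinery developed in the first half of the paper produces. For a decomposition of this form each $B_j$ meets at most one element of each $\mathcal{V}_\ell$, yielding a multiplicity bound of $mN$ for $\mathcal{W}$ at $y$. Finally, if $f$ is not coarsely surjective one pads $\mathcal{W}$ with $\mathcal{U}$-elements covering $Y \setminus f(X)$, which preserves uniform boundedness and only marginally affects multiplicity.
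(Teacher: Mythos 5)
There are two genuine problems here, and the first is that you are proving the wrong statement. In this paper ``large scale weakly paracompact'' means: for every uniformly bounded cover $\mathcal{U}$ there exists a uniformly bounded cover $\mathcal{W}$ such that \emph{each element of $\mathcal{U}$ intersects only finitely many elements of $\mathcal{W}$}. It is not the existence of a point-finite (finite multiplicity) uniformly bounded refinement of $\mathcal{U}$. You invoke the hypothesis on $X$ in the refinement/multiplicity form and you aim to produce a finite-multiplicity refinement of $\mathcal{U}$ on $Y$, so even a completed version of your argument would not establish the theorem as defined here. With the paper's definition the proof is much shorter and needs none of your closing machinery: build $\mathcal{U}'$ on $X$ exactly as you do, so that $f^{-1}(U)\subset A^U_1\cup\dots\cup A^U_n$ with $A^U_i\in\mathcal{U}'$; apply weak paracompactness of $X$ to get a uniformly bounded $\mathcal{W}_X$ such that each element of $\mathcal{U}'$ meets only finitely many members of $\mathcal{W}_X$; then push forward. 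If $U\in\mathcal{U}$ meets $f(W)$ for $W\in\mathcal{W}_X$, then $W$ meets $f^{-1}(U)$ and hence meets one of the $n$ sets $A^U_i$, each of which meets only finitely many members of $\mathcal{W}_X$; so $U$ meets at most finitely many elements of $f(\mathcal{W}_X)$, and $f(\mathcal{W}_X)$ is uniformly bounded since $f$ is coarse. That is the paper's argument.

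The second problem is the obstacle you yourself flag at the end: it is real, and your proposed fix does not close it. Decomposing a uniformly bounded cover into finitely many coarsely disjoint subfamilies whose disjointness parameter exceeds an \emph{arbitrary} prescribed uniformly bounded family is exactly the Ostrand-type characterization of finite asymptotic dimension proved earlier in the paper; large scale weak paracompactness of $X$ does not provide such decompositions. So even under your reading of the definition, the bound on the number of elements of $\mathcal{V}$ meeting a bounded set $B_j$ --- and hence the multiplicity bound for $\mathcal{W}$ at $y$ --- remains unproved. (A smaller issue: the coarsely $n$-to-$1$ condition gives $f^{-1}(U)\subset\bigcup_i A^U_i$, not $A^U_i\subset f^{-1}(U)$, so $f(V)\subset U$ requires first replacing each $A^U_i$ by $A^U_i\cap f^{-1}(U)$, as in the paper's Proposition on equality of preimages.)
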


\begin{Theorem}
Let $X$ and $Y$ be spaces and $f:X \to Y$ coarse and large scale $n$-to-1. If $X$ is of bounded geometry then $Y$ is of bounded geometry. Conversely, if $Y$ is of bounded geometry then there exists a bounded geometry subspace $Z\subset X$ for which the inclusion induces a coarse equivalence from $Z$ to $X$.
\end{Theorem}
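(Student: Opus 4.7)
The plan is to prove both directions by transferring a witnessing ``sparse'' cover across $f$, using that the coarsely $n$-to-$1$ hypothesis turns a single bounded piece of $Y$ into at most $n$ bounded pieces of $X$ and vice versa.

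For the forward direction, let $\mathcal{U}$ be a uniformly bounded cover of $X$ witnessing bounded geometry, in the sense that for every uniformly bounded cover $\mathcal{V}$ of $X$ there is a constant $N(\mathcal{V})$ bounding the number of elements of $\mathcal{U}$ meeting any $V \in \mathcal{V}$. I would push this forward to $\mathcal{U}_Y := \{f(U) : U \in \mathcal{U}\}$, which is uniformly bounded in $Y$ since $f$ is coarse, and (assuming coarse surjectivity, or passing to a coarse neighborhood of $f(X)$) covers $Y$. Given a uniformly bounded cover $\mathcal{W}$ of $Y$ and $W \in \mathcal{W}$, the coarsely $n$-to-$1$ hypothesis produces sets $A_1(W),\dots,A_n(W)$ of uniformly bounded diameter $S = S(\mathcal{W})$ whose union covers $f^{-1}(W)$. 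Collecting these gives a uniformly bounded family in $X$, and bounded geometry of $X$ yields $N_0$ such that each $A_i(W)$ meets at most $N_0$ elements of $\mathcal{U}$. Hence $W$ meets at most $nN_0$ elements of $f(\mathcal{U})$, witnessing bounded geometry of $Y$.

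For the converse, fix a uniformly bounded cover $\mathcal{U}_Y$ of $Y$ witnessing bounded geometry. For each $U \in \mathcal{U}_Y$ choose, via the coarsely $n$-to-$1$ property, sets $V_1(U),\dots,V_n(U)$ of uniformly bounded diameter whose union is $f^{-1}(U)$; pick a representative $z_i(U)$ in each nonempty $V_i(U)$ and let $Z$ be the collection of all such points. Every $x \in X$ lies in some $V_i(U)$, hence at uniformly bounded distance from the corresponding $z_i(U) \in Z$, so $Z \hookrightarrow X$ is a coarse equivalence. For bounded geometry of $Z$: any uniformly bounded $W \subset Z$ has uniformly bounded $f$-image, which by bounded geometry of $Y$ meets at most $N$ elements of $\mathcal{U}_Y$; each such $U$ contributes at most $n$ points to $Z$, so $|W| \leq nN$, establishing the required bound.

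The main obstacle is pinning down the correct formulation of bounded geometry for a general large scale structure (usually phrased as the existence of a uniformly bounded cover whose meeting multiplicity against every uniformly bounded cover is finite) and then verifying that the naturally produced witness covers on each side satisfy this property with respect to the witness cover itself, rather than merely being refinable to one. Once these definitional matters are settled, the core argument is a bookkeeping combination of the factor $n$ from the coarsely $n$-to-$1$ hypothesis with the bounded geometry constant on the opposite side.
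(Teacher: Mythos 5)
Your proposal follows essentially the same route as the paper: the forward direction pushes the bounded-geometry constant of $X$ through the $n$ bounded pieces covering each point-inverse, and the converse builds $Z$ by selecting one representative from each of the $n$ bounded pieces covering the preimage of each element of a uniformly bounded cover of $Y$, which is exactly the paper's construction of the sets $R_y$, followed by the same net and cardinality-counting arguments. One caveat: the paper's definition of bounded geometry is a cardinality bound (every uniformly bounded set contains at most $m(\mathcal{U})$ points), not the meeting-multiplicity formulation you adopt in the forward direction; under the paper's definition that direction is immediate, since $|V| \le n\cdot m(\mathcal{U})$ once $V \subset f(f^{-1}(V))$, which is where your (correctly flagged) coarse-surjectivity assumption enters. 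Your converse counting, bounding $|W|$ by $n$ times the number of elements of $\mathcal{U}_Y$ met by $f(W)$, matches the paper's, though note the paper states and proves that direction only for metric $X$ and $Y$.
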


All of these concepts are introduced for the metric setting in the survey \cite{CDV1}(see also \cite{CDV2}). In the process of proving the above assertions, we show that large scale $n$-to-$1$ functions are optimal for pushing forward certain discrete collections of sets. It is therefore another goal of this paper to generalize certain discreteness properties to large scale structures. 

The author would like to thank Jerzy Dydak for all of his ideas and suggestions for improving this note. The author would like to also thank \v Ziga Virk and Michael Holloway for many helpful discussions on the topic.  

\section{Surjections of Large Scale Structures}\label{2}

In \cite{DH} the authors define \textbf{Large Scale Structures} for any set $X$. Their definition is equivalent to that of a coarse structure by John Roe in \cite{R}. Their motivation was that the definition of coarse structures given by Roe \cite{R} should be equivalent to specifying which collections of subsets of $X$ are uniformly bounded. Before getting to the definition, we introduce some terminology in order to better motivate the definition of large scale structure.

Given some set $X$ and $\mathcal{U} \subset 2^X$(the power set of $X$) the \textbf{star} of some subset $U$ of $X$ with respect to $\mathcal{U}$ is defined by by $st(U,\mathcal{U}) = \cup\{V \in \mathcal{U}: V \cap U \neq \emptyset\}$. Given two collections $\mathcal{V}_1 ,\mathcal{V}_2 \subset 2^X$ the star of $\mathcal{V}_1$ with respect to $\mathcal{V}_2$ is  denoted by $st(\mathcal{V}_1,\mathcal{V}_2)$  and is the new collection $\{st(V,\mathcal{V}_2): V \in \mathcal{V}_1\}$. We define $st(\mathcal{U}) = st(\mathcal{U},\mathcal{U})$ and inductively define higher stars by $st^n(\mathcal{U}) = st(st^{n-1}(\mathcal{U}),\mathcal{U})$. Given two covers $\mathcal{U}$ and $\mathcal{V}$ of some set $X$, we say that $\mathcal{U}$ \textbf{coarsens} $\mathcal{V}$(equivalently $\mathcal{V}$ \textbf{refines} $\mathcal{U}$), denoted by $\mathcal{U} \ge\mathcal{V}$, if each element of $\mathcal{V}$ is contained in some element of $\mathcal{U}$. Recall that the \textbf{Lebesgue number} of a cover $\mathcal{U}$ of a metric space $X$ is $\sup\{R \in \RR_{\ge 0}: \mathcal{U} \ge \{B(x,R):x\in X\}\}$ (this is actually two times the usual Lebesgue number). Also recall that the \textbf{multiplicity} of a cover $\mathcal{U}$ of some set $X$ is the maximum number of elements of $\mathcal{U}$ which contain a point in common and $\infty$ if no such maximum exists.

Notice that if $\mathcal{U}$ is a cover of some set, then $st(\mathcal{U},\mathcal{U})$ is a cover whose elements are unions of the elements of $\mathcal{U}$. We may think of $\mathcal{U}$ like it is a collection of points and $st(\mathcal{U},\mathcal{U})$ as a collection of neighborhoods of the points of $\mathcal{U}$. The philosophy behind coarse geometry is that every cover is a cover by points (when you zoom away far enough) and that you have to work your way outward via staring to find neighborhoods of those points. Here is the precise definition of a large scale structure.

\begin{Definition}[\cite{DH}]
A \textbf{Large Scale Structure} on a set $X$ is a nonempty set of covers $\mathcal{B}$ of subsets of $X$ satisfying 

1) $\mathcal{LSS}$ is closed under refinements.

2) If $\mathcal{B}_1$, $\mathcal{B}_2 \in \mathcal{B}$ then $st(\mathcal{B}_1,\mathcal{B}_2) \in \mathcal{B}$. 

\end{Definition}

\begin{Remark}
If a set $X$ is endowed with a large scale structure $\mathcal{LSS}$, then we will often just say that $X$ is a \textbf{large scale space} without making mention of $\mathcal{LSS}$.
\end{Remark}

Let $(X,\mathcal{LSS}_X)$ be a large scale structure. Given a uniformly bounded family $\mathcal{B} \in \mathcal{LSS}_X$ we define the trivial extension of $\mathcal{B}$ to be $\mathcal{B} \cup \{\{x\}:x\in X\}$. By $1$ above and the fact that there is at least one cover in $\mathcal{LSS}$, we have that the collection of singletons is in any large scale structure. 

An important example of large scale structures that will be used in the last section is the metric large scale structure. If $X$ is metric then the uniformly bounded families are collections of subsets of $X$ with bounded mesh; i.e the collections are precisely the refinements of the covers of $X$ by $R$-balls where $R\ge 0$. We begin to motivate the definition of coarsely $n$-to-$1$ maps using this large scale structure as a base model. Recall that for a metric $X$ and $R>0$ a collection of subsets $\mathcal{B}$ of $X$ is said to be \textbf{$R-$discrete} if $dist(U,V)\ge R$ for all $U,V \in \mathcal{B}$. The proof of the following proposition can be found in \cite{DV}.

\begin{Proposition}\label{Discrete} 
A function $f:X\to Y$ of metric spaces is coarsely $n$-to-$1$ if and only if for every $R,S>0$ there is a uniformly bounded cover $\mathcal{V}$ of $X$ such that the preimage of an $S-$ball in $Y$ can be covered by at most $n$ elements of $\mathcal{V}$ that are $R-$disjoint.
\end{Proposition}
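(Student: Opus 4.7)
My plan is to prove both directions separately, with the nontrivial content concentrated in the forward implication. For the easy direction ($\Leftarrow$), I would observe that any subset of $Y$ of diameter at most $R$ sits inside an $R$-ball around one of its points, so, given an input $R>0$ for the definition of coarsely $n$-to-$1$, I apply the hypothesis with the ball-scale parameter equal to $R$ (and any auxiliary discreteness constant, which plays no role here). This yields a uniformly bounded cover $\mathcal{V}$ of $X$ such that the preimage of any $R$-ball, hence of any diameter-$\leq R$ subset, is covered by at most $n$ elements of $\mathcal{V}$. Taking $S'$ to be the mesh of $\mathcal{V}$ completes this direction.

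For the forward direction ($\Rightarrow$), fix $R,S>0$. Since each $S$-ball in $Y$ has diameter at most $2S$, the coarsely $n$-to-$1$ hypothesis produces $S_0>0$ such that for every $y\in Y$, the preimage $f^{-1}(B(y,S))$ can be covered by $m_y\leq n$ sets $A_1^y,\dots,A_{m_y}^y$ of diameter at most $S_0$. This cover is not yet $R$-disjoint, so I would introduce a merging step: form the graph on $\{A_1^y,\dots,A_{m_y}^y\}$ whose edges connect pairs with $\dist(A_i^y,A_j^y)<R$, and let $B_1^y,\dots,B_{k_y}^y$ (with $k_y\leq n$) be the unions of the $A_i^y$'s lying in each connected component. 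A straightforward chaining estimate then gives $\diam(B_l^y)\leq nS_0+(n-1)R$, and sets coming from distinct components are pairwise $R$-disjoint by construction. The desired cover is then $\mathcal{V}:=\bigcup_{y\in Y}\{B_1^y,\dots,B_{k_y}^y\}$, which is uniformly bounded, covers $X$ (each $x$ is handled by taking $y=f(x)$), and has the required preimage property by design.

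The main, mild obstacle is this merging step: turning a bounded-diameter cover into an $R$-disjoint one without inflating the count beyond $n$. The key observation is the graph-theoretic one that collapsing pieces at distance less than $R$ into equivalence classes can only decrease the count, while any chain inside a class has length at most $n-1$, so diameters grow only by the uniformly bounded amount $nS_0+(n-1)R$. Everything else reduces to routine bookkeeping with balls, preimages, and mesh.
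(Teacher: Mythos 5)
Your proposal is correct and follows essentially the same route as the paper: the paper's ``$R$-lattices'' are exactly the connected components of your distance-$<R$ graph, and its observation that the merged pieces are coarsened by $st^n(\mathcal{V})$ is the same chaining bound you state explicitly as $nS_0+(n-1)R$. The reverse direction is likewise dismissed as straightforward in the paper, for the same reason you give.
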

\begin{proof}
$\Leftarrow$ is straightforward.\\
 $(\Rightarrow)$ Let $\mathcal{U}$ be a collection of subsets of a metric space. We define an $R$-lattice in $\mathcal{U}$ is a subset $\mathcal{V} \subset \mathcal{V}$ such that for each $V\in \mathcal{V}$, there exists $V' \in \mathcal{V}$ such that $d(V,V') \leq R$. It is easy to see that the $R-$lattices partition $\mathcal{U}$.

 Let $\mathcal{U}$ be a cover of $Y$ by $S-$points and let $\mathcal{V}$ a collection of $R$-points be such that the preimage of an $S$-point $A \i \mathcal{U}$ can be covered by at most $n$ $R-$points $A_1,A_2,\hdots ,A_n \in \mathcal{V}$ in $X$. Let $\{\mathcal{B}_j: 1 \leq j \leq n\}$ be a partition $\{A_1,A_2,\hdots ,A_n\}$ into $R$-lattices. Replace $\{A_1,A_2,\hdots ,A_n\}$ with $\{\cup\mathcal{B}_i: 1\leq j \leq n\}$ and notice that it is $R-$disjoint and is coarsened by the cover $st^n(\mathcal{V})$. It follows that there exists a uniformly bounded cover (uniformly bounded by $st^n(\mathcal{V})$) that satisfies the desired properties.
\end{proof}

The following proposition sheds some light on how to define $R$-disjointness in the general large scale setting.  Recall that a metric space has the \textbf{midpoint property} if for every $x,y \in X$ there exists a point $z \in X$ such that $d(x,z) = d(y,z) = \frac{d(x,y)}{2}$. 

\begin{Proposition}
Let $X$ be a metric space with midpoint property. A collection of closed subsets $\mathcal{V}$ is $R-$disjoint if and only if the collection $st(\mathcal{V},\{B(x,\frac{R}{2}):x\in X\})$ is a disjoint family. If $X$ does not have the midpoint property then 
\end{Proposition}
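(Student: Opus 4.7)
The plan is to reduce the equivalence to a single geometric identity that holds in any metric space $X$ with the midpoint property:
\[
\st\bigl(V, \{B(x, R/2) : x \in X\}\bigr) \;=\; \{y \in X : d(y, V) < R\}
\]
for every closed set $V \subseteq X$. Once this identity is available, pairwise disjointness of the family $\st(\mathcal{V}, \{B(x, R/2) : x \in X\})$ translates directly into pairwise metric separation conditions on the members of $\mathcal{V}$, from which both directions of the equivalence fall out.

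To prove the identity, one inclusion uses only the triangle inequality and holds in any metric space: if $y \in B(x, R/2)$ and $v \in V \cap B(x, R/2)$, then $d(y, v) < R$, placing $y$ in the open $R$-neighborhood of $V$. The reverse inclusion is exactly the step that requires the midpoint hypothesis. Given $y$ with $d(y, V) < R$, choose $v \in V$ with $d(y, v) < R$ and let $m$ be a midpoint of $\{y, v\}$; then $d(m, y) = d(m, v) = d(y, v)/2 < R/2$, so the single ball $B(m, R/2)$ contains both $v$ and $y$, witnessing $y$ in the star. This midpoint move is the main technical content of the proof, and its failure in a general metric space is presumably what the (cut-off) second half of the proposition is meant to exploit.

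With the identity in hand, both directions of the equivalence are routine. Forward: if $\mathcal{V}$ is $R$-disjoint and some $y$ lies in both $\st(V, \cdot)$ and $\st(V', \cdot)$ for distinct $V, V' \in \mathcal{V}$, then by the identity $y$ sits in both open $R$-neighborhoods, producing $v \in V, v' \in V'$ with $d(v, v') < 2R$, which conflicts with $R$-disjointness after matching the author's strict versus non-strict conventions. Converse: any failure of $R$-disjointness supplies $v \in V$ and $v' \in V'$ close enough that the midpoint construction above deposits a single ball of radius $R/2$ into both stars, violating their disjointness. The one point I would audit most carefully before committing to the write-up is the interaction between the radius $R/2$ in the cover and the constant $R$ in ``$R$-disjoint'' — a naive triangle-inequality estimate produces a factor-of-two that must be reconciled against the precise convention being used, so that the forward and converse implications calibrate to the same numerical $R$.
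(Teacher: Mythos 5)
Your neighborhood identity is correct and organizes the argument more cleanly than the paper does: under the midpoint property one indeed has $st(V,\{B(x,\frac{R}{2}):x\in X\})=\{y\in X: d(y,V)<R\}$, with the triangle inequality giving one inclusion and the midpoint construction giving the other. Your converse direction (a failure of $R$-disjointness produces a midpoint lying in two of the stars) is exactly the paper's argument for that implication.

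The factor-of-two worry you flag at the end, however, is not a bookkeeping issue that can be calibrated away --- it is a genuine failure of the forward implication as stated. With the paper's convention that $R$-disjoint means $\dist(U,V)\ge R$, take $X=\RR$, $V=\{0\}$, $V'=\{R\}$: then $\{V,V'\}$ is $R$-disjoint, yet $st(V,\{B(x,\frac{R}{2}):x\in X\})=(-R,R)$ and $st(V',\{B(x,\frac{R}{2}):x\in X\})=(0,2R)$ overlap. Your identity makes the correct calibration transparent: in a midpoint space, disjointness of the open $R$-neighborhoods of $V$ and $V'$ is equivalent to $\dist(V,V')\ge 2R$, so the proposition should pair the cover by $\frac{R}{2}$-balls with $2R$-disjointness, or equivalently use $\frac{R}{4}$-balls to characterize $R$-disjointness. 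The paper's one-line proof of the forward direction (``disjoint by the triangle inequality'') overlooks this: from a point common to two stars the triangle inequality only yields $\dist(V,V')<2R$, which does not contradict $R$-disjointness. The slip is harmless for the rest of the paper, since the later results only use such equivalences up to a uniform change of constants, but your instinct to audit that step before committing was exactly right, and you should state the corrected constants rather than the ones given.
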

\begin{proof}
$(\Rightarrow)$ If $\mathcal{V}$ is $R$-disjoint then the collection $st(\mathcal{V},\{B(x,\frac{R}{2}):x\in X\})$ is disjoint by the triangle inequality.

$(\Leftarrow)$ Assume $st(\mathcal{V},\{B(x,\frac{R}{2}):x\in X\})$ is a disjoint family and suppose there is $A,B \in \mathcal{V}$ with $d(A,B) < R$. It follows that there is $x \in A$ and $y \in B$ such that $d(x,y) < R$. Let $z$ be a midpoint between $x$ and $y$. Notice then that $z \in st(A, \{B(x,\frac{R}{2}):x\in X\})$ and $z \in st(B, \{B(x,\frac{R}{2}):x\in X\})$ contrary to our assumption.
\end{proof}

The following proposition shows that one does not need to consider only geodesic metric spaces.

\begin{Proposition}
Let $X$ be metric. The following are equivalent for collections of subsets $\mathcal{B}_i$:\\
1) There exists a sequence of $\{R_i\}$ diverging to $\infty$ such that $\mathcal{B}_i$ is $R_i$-disjoint for each $i \ge 1$. \\
2) There exists a sequence of covers $\mathcal{U}_i$ such that $Leb(\mathcal{U}_i)$ diverges to $\infty$ and $st(\mathcal{B}_i,\mathcal{U}_i)$ is a disjoint collection for each $i\ge 1$.
\end{Proposition}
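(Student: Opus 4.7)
The plan is to convert between ``$R_i$-disjointness'' of $\mathcal{B}_i$ and disjointness of the stars $\st(\mathcal{B}_i,\mathcal{U}_i)$ by using a cover of $X$ by balls of radius proportional to $R_i$ in one direction, and by unpacking the definition of Lebesgue number in the other. Neither direction looks especially delicate; the only point of care is that the Lebesgue number is defined as a supremum, so the second implication will be run for each $R<R_i$ and then passed to the limit.

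For $(1)\Rightarrow(2)$, suppose $\mathcal{B}_i$ is $R_i$-disjoint with $R_i\to\infty$. I would take $\mathcal{U}_i:=\{B(x,R_i/4):x\in X\}$, which manifestly has Lebesgue number at least $R_i/4$. To verify that $\st(\mathcal{B}_i,\mathcal{U}_i)$ is disjoint, assume a point $z$ lies in both $\st(A,\mathcal{U}_i)$ and $\st(B,\mathcal{U}_i)$ for distinct $A,B\in\mathcal{B}_i$. Unpacking the definitions produces centers $x,x'\in X$ with $z\in B(x,R_i/4)\cap B(x',R_i/4)$ together with $a\in A\cap B(x,R_i/4)$ and $b\in B\cap B(x',R_i/4)$, and four applications of the triangle inequality give $d(a,b)<R_i$, contradicting $R_i$-disjointness.

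For $(2)\Rightarrow(1)$, set $R_i:=\Leb(\mathcal{U}_i)$, which diverges by hypothesis. Given distinct $A,B\in\mathcal{B}_i$ and any $R<R_i$, I claim $d(A,B)\geq R$. Suppose otherwise and pick $a\in A$ and $b\in B$ with $d(a,b)<R$. By definition of Lebesgue number there exists $U\in\mathcal{U}_i$ with $B(a,R)\subset U$; then $U$ meets $A$ at $a$, so $U\subset\st(A,\mathcal{U}_i)$, and in particular $b\in\st(A,\mathcal{U}_i)$. Since $\mathcal{U}_i$ covers $X$, there also exists $V\in\mathcal{U}_i$ containing $b$, which meets $B$, so $b\in\st(B,\mathcal{U}_i)$ as well. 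This contradicts disjointness of $\st(\mathcal{B}_i,\mathcal{U}_i)$. Letting $R\nearrow R_i$ gives $d(A,B)\geq R_i$, so $\mathcal{B}_i$ is $R_i$-disjoint.
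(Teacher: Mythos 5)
Your proof is correct and follows essentially the same route as the paper: a cover by balls of radius proportional to $R_i$ for $(1)\Rightarrow(2)$, and unpacking the definition of the Lebesgue number for $(2)\Rightarrow(1)$. If anything you are more careful than the paper, which uses balls of radius $R_i/2$ (for which disjointness of the stars can fail when two sets of $\mathcal{B}_i$ are at distance between $R_i$ and $2R_i$) and does not address that the supremum defining $\Leb(\mathcal{U}_i)$ need not be attained; your choice of radius $R_i/4$ and the limiting argument $R\nearrow R_i$ handle both points.
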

\begin{proof}
(1) $\Rightarrow$ (2) It is easy to see that $(1)$ implies that $st(\mathcal{B}_i, \{B(x,R_i/2):x\in X\})$ is disjoint for $i \ge 1$.\\
(2) $\Rightarrow$ (1) By (2), we know that $st(\mathcal{B}_i,\{B(x,Leb(\mathcal{U}_i)):x\in X\})$ is a disjoint family and it therefore follows that $\mathcal{B}_i$ is at least $Leb(\mathcal{U}_i)$-disjoint for each $i\ge 1$.
\end{proof}

In view of the above proposition, it makes sense to define coarsely $n$-to-$1$ maps in two ways for general large scale structures. By the lemma, the notion of $R$-discreteness can be generalized to large scale structures as follows: Let $\mathcal{W}$ be a uniformly bounded collection of a large scale structure $X$. A collection $\mathcal{V}$ is said to be \textbf{$\mathcal{W}-$discrete} if $st(\mathcal{V},\mathcal{W})$ is a disjoint family.

\begin{Definition}
Let $X$ and  $Y$ be large scale structures on sets $X$ and $Y$. A function $f:X\to Y$ is \textbf{coarse}(or \textbf{bornologous}) if for every $\mathcal{B}_X \in \mathcal{LSS}_X$ there exists some $\mathcal{B}_Y \in \mathcal{LSS}_Y$ such that $f(\mathcal{B}_x)$ refines $\mathcal{B}_Y$.

A function $f:X \to Y$ is \textbf{coarsely $n$-to-$1$}(or \textbf{large scale $n$-to-$1$}) if for every uniformly bounded cover $\mathcal{U}_Y$ of $Y$ there exists a uniformly bounded cover $\mathcal{U}_X$ of $X$ such that for every $B\in \mathcal{U}_Y$ there exists $B_1,B_2, \hdots, B_n \in \mathcal{U}_X$ with $f^{-1}(B) \subset \bigcup_{i=1}^n B_i$. 

A function $f:X \to Y$ is \textbf{discretely $n$-to-$1$} if for every uniformly bounded cover $\mathcal{U}_Y$ of $Y$ and uniformly bounded cover $\mathcal{W}_X$ of $X$ there exists a uniformly bounded cover $\mathcal{U}_X$ of $X$ such that for every $B\in \mathcal{U}_Y$ there exists a $\mathcal{W}_X$-discrete collection $\{B_1,B_2, \hdots, B_n\} \subset \mathcal{U_X}$  with $f^{-1}(B) \subset \bigcup_{i=1}^n B_i$.
\end{Definition}

\begin{Remark} This definition of a coarse map does not assume that the preimage of a bounded set is bounded. 
\end{Remark}

\begin{Remark}
We will later show that discretely $n$-to-$1$ maps are the same as coarsely $n$-to-$1$ maps.
\end{Remark}

The following proposition will be useful later on and its proof is imediate.

\begin{Proposition}
Let $f:X\to Y$ be a function between large scale structures. $f$ is coarsely $n$-to-$1$ if and only if for every uniformly bounded cover $\mathcal{U}_Y$ of $Y$ there exists a uniformly bounded cover $\mathcal{U}_X$ of $X$ such that for every $B\in \mathcal{U}_Y$ there exists $B_1,B_2, \hdots, B_n \in \mathcal{U}_X$ with $f^{-1}(B)= \bigcup_{i=1}^n B_i$. 
\end{Proposition}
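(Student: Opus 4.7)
The backward implication is immediate: if $f^{-1}(B)=\bigcup_{i=1}^n B_i$ then certainly $f^{-1}(B)\subset \bigcup_{i=1}^n B_i$, so the equality version trivially supplies the containment version that defines coarse $n$-to-$1$ maps. All the content is in the forward direction, where we must upgrade an inclusion to an equality by shrinking the $B_i$'s without leaving the large scale structure.

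The plan for $(\Rightarrow)$ is a one-step refinement trick. Fix a uniformly bounded cover $\mathcal{U}_Y$ of $Y$ and let $\mathcal{U}_X$ be the uniformly bounded cover of $X$ supplied by the original coarsely $n$-to-$1$ definition. Form
\[
\mathcal{U}_X' \;=\; \{\, B\cap f^{-1}(A) : B\in\mathcal{U}_X,\; A\in\mathcal{U}_Y\,\}.
\]
Then $\mathcal{U}_X'$ refines $\mathcal{U}_X$, so it lies in $\mathcal{LSS}_X$ by closure under refinements; and it is a cover because for any $x\in X$ there exist $B\in\mathcal{U}_X$ with $x\in B$ and $A\in\mathcal{U}_Y$ with $f(x)\in A$, whence $x\in B\cap f^{-1}(A)\in\mathcal{U}_X'$.

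Now given $B\in\mathcal{U}_Y$, pick $B_1,\ldots,B_n\in\mathcal{U}_X$ with $f^{-1}(B)\subset\bigcup_{i=1}^n B_i$, and set $B_i' := B_i\cap f^{-1}(B)\in\mathcal{U}_X'$. Since each $B_i'$ is contained in $f^{-1}(B)$, we have $\bigcup_{i=1}^n B_i'\subset f^{-1}(B)$; conversely, any $x\in f^{-1}(B)$ lies in some $B_i$ by the containment above, hence in $B_i'$. Therefore $f^{-1}(B)=\bigcup_{i=1}^n B_i'$, as required.

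There is really no main obstacle here; the only point one might linger on is checking that $\mathcal{U}_X'$ is genuinely a cover (not merely a family inside $\mathcal{LSS}_X$), which uses that both $\mathcal{U}_X$ and $\mathcal{U}_Y$ are covers and that $f$ is defined on all of $X$. This is why the proposition is stated as having an immediate proof.
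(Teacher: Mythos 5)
Your proof is correct, and it is exactly the argument the paper has in mind: the paper omits the proof entirely, calling it immediate, and the intersection-with-$f^{-1}(B)$ trick (using closure of $\mathcal{LSS}_X$ under refinements to keep the shrunken sets uniformly bounded) is the standard way to upgrade the containment to an equality.
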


\section{Metrizability}

In this section, we show that metrizability is pushed forward by a certain class of coarsely $n$-to-$1$ maps. We recall some results about metrizability for large scale structures.

\begin{Proposition} \label{LargeScaleBasis} \cite{DH}
If $\mathcal{LSS}_X'$ is a set of families of subsets of $X$ such that $\mathcal{B}_{\alpha},\mathcal{B}_{\beta} \in \mathcal{LSS}_X'$ implies the existence of $\mathcal{B}_{\gamma} \in \mathcal{LSS}_X'$ such that $\mathcal{B}_{\alpha} \cup \mathcal{B}_{\beta} \cup st(\mathcal{B}_{\alpha},\mathcal{B}_{\beta})$ refines $\mathcal{B}_{\gamma}$, then the family $\mathcal{LSS}_X$ of all refinements of trivial extensions of elements of $\mathcal{LSS}_X'$ forms a large scale structure on $X$.
\end{Proposition}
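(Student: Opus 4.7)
Proof Plan.

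The plan is to verify the two axioms of a large scale structure for $\mathcal{LSS}_X$. Closure under refinements (Axiom 1) is immediate by transitivity: if $\mathcal{B}''$ refines $\mathcal{B}' \in \mathcal{LSS}_X$ and $\mathcal{B}'$ in turn refines the trivial extension of some $\mathcal{B}_\alpha \in \mathcal{LSS}_X'$, then $\mathcal{B}''$ also refines that trivial extension, hence lies in $\mathcal{LSS}_X$.

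For closure under the star operation (Axiom 2), take $\mathcal{B}_1,\mathcal{B}_2 \in \mathcal{LSS}_X$ refining the trivial extensions of $\mathcal{B}_\alpha,\mathcal{B}_\beta \in \mathcal{LSS}_X'$, respectively. The idea is to apply the hypothesis \emph{twice}: first to $(\mathcal{B}_\alpha,\mathcal{B}_\beta)$ to produce $\mathcal{B}_\gamma$ with $\mathcal{B}_\alpha \cup \mathcal{B}_\beta \cup st(\mathcal{B}_\alpha,\mathcal{B}_\beta)$ refining $\mathcal{B}_\gamma$, and then to $(\mathcal{B}_\gamma,\mathcal{B}_\gamma)$ to produce $\mathcal{B}_\delta$ with $\mathcal{B}_\gamma \cup st(\mathcal{B}_\gamma,\mathcal{B}_\gamma)$ refining $\mathcal{B}_\delta$. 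I would then show that $st(\mathcal{B}_1,\mathcal{B}_2)$ refines the trivial extension of $\mathcal{B}_\delta$, placing it in $\mathcal{LSS}_X$.

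The core computation: fix $B_1 \in \mathcal{B}_1$ and split on the type of element of the trivial extension of $\mathcal{B}_\alpha$ containing it. If $B_1 \subset A_1 \in \mathcal{B}_\alpha$, then for any $B_2 \in \mathcal{B}_2$ meeting $B_1$, either $B_2 \subset A_2 \in \mathcal{B}_\beta$ — in which case $A_2 \cap A_1 \neq \emptyset$ (via $B_2 \cap B_1$) and so $B_2 \subset st(A_1,\mathcal{B}_\beta)$ — or $B_2$ is a singleton $\{x\} \subset A_1$. This gives $st(B_1,\mathcal{B}_2) \subset A_1 \cup st(A_1,\mathcal{B}_\beta) \subset st(A_1,\mathcal{B}_\alpha \cup \mathcal{B}_\beta)$. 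Since $\mathcal{B}_\alpha \cup \mathcal{B}_\beta$ refines $\mathcal{B}_\gamma$, this set lies inside $st(A_1,\mathcal{B}_\gamma)$, which in turn sits inside $st(A',\mathcal{B}_\gamma)$ for any $A' \in \mathcal{B}_\gamma$ containing $A_1$; the second application of the hypothesis places $st(A',\mathcal{B}_\gamma)$ inside an element of $\mathcal{B}_\delta$. The remaining case where $B_1 = \{y\}$ is handled symmetrically: if some $A_2^* \in \mathcal{B}_\beta$ contains $y$ one repeats the argument via $st(A_2^*,\mathcal{B}_\beta) \subset st(A',\mathcal{B}_\gamma)$, and otherwise $st(B_1,\mathcal{B}_2) \subset \{y\}$ is already in the trivial extension.

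The main obstacle is precisely the need for the \emph{second} application of the hypothesis. After only one application, the two pieces $A_1$ and $st(A_1,\mathcal{B}_\beta)$ refine into possibly distinct elements of $\mathcal{B}_\gamma$, so their union need not fit inside any single element of $\mathcal{B}_\gamma$. What rescues the argument is that whenever $st(A_1,\mathcal{B}_\beta)$ is nonempty it shares a point with $A_1$, so the two corresponding $\mathcal{B}_\gamma$-elements must intersect and their union fits inside a single star $st(\cdot,\mathcal{B}_\gamma)$ — and the second application is exactly what absorbs such stars into $\mathcal{B}_\delta$.
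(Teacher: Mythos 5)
The paper does not prove this proposition at all---it is quoted from \cite{DH} with no argument given---so there is nothing to compare against except the statement itself. Your proposal is a correct, self-contained verification. Axiom 1 is indeed just transitivity of refinement. For Axiom 2 your key computation is sound: for $B_1\subset A_1\in\mathcal{B}_\alpha$ one gets $st(B_1,\mathcal{B}_2)\subset A_1\cup st(A_1,\mathcal{B}_\beta)\subset st(A_1,\mathcal{B}_\alpha\cup\mathcal{B}_\beta)\subset st(A',\mathcal{B}_\gamma)$ for any $A'\in\mathcal{B}_\gamma$ containing $A_1$, and the singleton cases degenerate correctly into either an element of the trivial extension or a star of an element of $\mathcal{B}_\beta$, which you correctly absorb into $st(\cdot,\mathcal{B}_\gamma)$. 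You are also right that a single application of the hypothesis is not enough, since $A_1$ and $st(A_1,\mathcal{B}_\beta)$ may land in distinct elements of $\mathcal{B}_\gamma$; your observation that these two $\mathcal{B}_\gamma$-elements must meet (when the star is nonempty), so that the second application applied to $(\mathcal{B}_\gamma,\mathcal{B}_\gamma)$ produces a $\mathcal{B}_\delta$ absorbing $st(\mathcal{B}_\gamma,\mathcal{B}_\gamma)$, is exactly the right fix. The only cosmetic omissions are the degenerate checks (empty sets, nonemptiness of $\mathcal{LSS}_X'$ so that $\mathcal{LSS}_X$ is nonempty), which are trivial.
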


A standard set of families that satisfy the above criterion if the collection of covers of a metric space by $n-$balls for $n \in \NN$. The large scale structure of a metric space is precisely the the set of all refinements of these covers. It will be convenient to define large scale structures as those generated by a certain collection of uniformly bounded covers just as one defines a topology by specifying a basis. This leads to the following definition.

\begin{Definition}
Let $(X,\mathcal{LSS}_X)$ be a large scale structure. If there exists a set of families $\mathcal{LSS}$ which satisfy the criterion of proposition \ref{LargeScaleBasis} above then we will say that $\mathcal{LSS}$ is a \textbf{large scale basis} for $(X,\mathcal{LSS}_X)$. 
\end{Definition}

The following is a nice characterization of metrizability of a LS-structure in terms of large scale basis. The proof can be found in \cite{DH} and in \cite{R}.

\begin{Proposition} \label{metrizability} \cite{DH}
Let $(X,\mathcal{LSS}_X)$ be a large scale structure. The following are equivalent:

1) $(X,\mathcal{LSS}_X)$ has a countable large scale basis $\mathcal{LSS}_X'$

2) There exists an $\infty$ metric $d$ such that the uniformly bounded covers with respect to the metric coincide with $\mathcal{LSS}_X$.
\end{Proposition}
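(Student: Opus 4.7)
The plan is to prove the two implications separately; $(2) \Rightarrow (1)$ is essentially by inspection, while $(1) \Rightarrow (2)$ requires building a metric from the basis via the classical Alexandroff--Urysohn--Frink scheme.

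For $(2) \Rightarrow (1)$, assume the large scale structure comes from an $\infty$-metric $d$. I set $\mathcal{LSS}_X' = \{\mathcal{B}_n\}_{n \in \mathbb{N}}$ with $\mathcal{B}_n = \{B(x,n) : x \in X\}$. The triangle inequality yields $st(\mathcal{B}_m, \mathcal{B}_n) \leq \mathcal{B}_{m+2n}$, so $\mathcal{B}_m \cup \mathcal{B}_n \cup st(\mathcal{B}_m, \mathcal{B}_n)$ refines $\mathcal{B}_\gamma$ for any large enough $\gamma$, meeting the basis criterion of Proposition \ref{LargeScaleBasis}. Since any uniformly bounded cover for $d$ has bounded mesh and hence is a refinement of some $\mathcal{B}_N$ (after passing to a trivial extension), the large scale structure generated by $\mathcal{LSS}_X'$ is precisely the metric one.

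For $(1) \Rightarrow (2)$, I begin with the countable basis $\{\mathcal{B}_n\}$ and first produce an increasing chain $\{\mathcal{C}_n\}_{n \in \mathbb{N}}$ of covers of $X$ satisfying $st(\mathcal{C}_n, \mathcal{C}_n) \leq \mathcal{C}_{n+1}$ and with each $\mathcal{B}_n$ refined by some $\mathcal{C}_m$. This is done inductively, applying the defining property of a basis to $\mathcal{C}_n$ together with $\mathcal{B}_{n+1}$ at each stage. I then define
\[
\delta(x,y) = \inf\{2^n : \{x,y\} \subset U \text{ for some } U \in \mathcal{C}_n\},
\]
with $\delta(x,y) = \infty$ when no such $n$ exists. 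The star condition forces a Frink-type inequality $\delta(x,z) \leq 2 \max\{\delta(x,y), \delta(y,z)\}$: if the two smaller values correspond to indices $\leq n$, then an element of $\mathcal{C}_n$ containing $y$ together with those containing the pairs yields $\{x,z\} \subset W$ for some $W \in \mathcal{C}_{n+1}$. Setting
\[
d(x,y) = \inf \Bigl\{ \sum_{i=1}^{k} \delta(x_{i-1},x_i) : x = x_0, x_1, \ldots, x_k = y \Bigr\},
\]
Frink's lemma then delivers an honest $\infty$-metric satisfying $\delta(x,y)/2 \leq d(x,y) \leq \delta(x,y)$.

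The final step is to identify the uniformly bounded covers for $d$ with $\mathcal{LSS}_X$. The bound $d \leq \delta$ shows each $\mathcal{C}_n$ has $d$-mesh at most $2^n$, so every element of $\mathcal{LSS}_X$ is uniformly $d$-bounded. Conversely, using $\delta \leq 2d$ together with the star-refinement property, a set of $d$-diameter at most $2^n$ fits inside a single element of $\mathcal{C}_{n+2}$, so every uniformly $d$-bounded cover is refined by some member of the basis. The principal obstacle is the Frink construction: one must verify the two-sided comparison $\delta/2 \leq d \leq \delta$ with care, and keep track of the fact that $d$ genuinely takes the value $\infty$ on pairs of points that no basis cover ever puts inside a common element, which is precisely why the target object is an $\infty$-metric rather than a metric.
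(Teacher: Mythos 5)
The paper does not actually prove this proposition; it is quoted from \cite{DH} with the remark that the proof can be found there and in \cite{R}. Your argument is, in substance, the standard Alexandroff--Urysohn--Frink chain construction that those sources use, and it is essentially correct: the reduction of the countable basis to a nested chain $\{\mathcal{C}_n\}$ with $st(\mathcal{C}_n,\mathcal{C}_n)\le\mathcal{C}_{n+1}$, the two-sided comparison of the chain-sum $d$ with $\delta$, and the final identification of the uniformly bounded covers (fixing a point $x_0$ of a set of small $d$-diameter and absorbing all the witnessing elements of $\mathcal{C}_{n+1}$ into a single star, hence into one element of $\mathcal{C}_{n+2}$) all go through.

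Two small points deserve care, both repairable in one line. First, the condition $st(\mathcal{C}_n,\mathcal{C}_n)\le\mathcal{C}_{n+1}$ gives only the \emph{double} composition of the associated relations, i.e.\ $\delta(x,z)\le 2\max\{\delta(x,y),\delta(y,z)\}$, whereas the clean form of the metrization lemma (as in Kelley, or in \cite{R}) is stated for the \emph{triple} composition $E_n\circ E_n\circ E_n\subset E_{n+1}$; with only the $2$-max inequality the naive induction on chain length does not close with the constant $2$, and the correctly proved Frink-type lemma yields $\delta/4\le d\le\delta$ rather than $\delta/2\le d\le\delta$. The painless fix is to pass to the subsequence $\mathcal{D}_n=\mathcal{C}_{2n}$, which does satisfy the triple condition; either way the two-sided comparison survives with a possibly worse constant, which is all the final step needs. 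Second, as defined, $\delta(x,x)=2^{0}$ rather than $0$ (every singleton lies in some element of $\mathcal{C}_0$ after trivial extension), so one must set $\delta(x,x)=0$ by fiat before forming the chain-sum; since $\delta\ge 1$ off the diagonal, the resulting $d$ is then a genuine $\infty$-metric and not merely a pseudometric. Neither point affects the conclusion.
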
 

\begin{Definition}
Let $(X,\mathcal{LSS}_X)$ be a large scale structure. Then $X$ is called \textbf{metrizable} if $X$ admits an $\infty$ metric such that $\mathcal{LSS}_X$ consists of all uniformly bounded collections of subsets of $X$
\end{Definition}

\begin{Theorem}
Let $X$ and $Y$ be coarse structures and $f:X\to Y$ coarse and coarsely $n$-to-$1$ and satisfies the following additional property:

($*$) For every uniformly bounded cover $\mathcal{U}$ there is a uniformly bounded cover $V$ of $X$ such that for each element $U \in \mathcal{U}$ there exists $U_1,U_2,\hdots,U_n \in \mathcal{U}$ such that $st^n(f(U_i),f(\mathcal{V})) \supset V$ for each $1\leq i \leq n$. 

If $X$ then $Y$ is metrizable. 
\end{Theorem}
\begin{proof}
By Theorem \ref{metrizability} we need only prove that $\mathcal{LSS}_X$ has a countable large scale basis if and only if $\mathcal{LSS}_Y$ has a countable large scale basis. 

Let $\mathcal{LSS}_X'$ be a countable large scale basis for $X$. Consider the collection $\mathcal{LSS}_Y'$ consisting of all possible finite stars of elements of $f(\mathcal{LSS}_X') := \{f(\mathcal{B}):\mathcal{B} \in \mathcal{LSS}_X\}$ where $f(\mathcal{B}) := \{f(B):B\in \mathcal{B}\}$.  $\mathcal{LSS}_Y'$ is a countable collection for $Y$ which satisfies the finite additivity condition of proposition \ref{LargeScaleBasis}. It needs to be shown that the large scale structure on $Y$ generated by $\mathcal{LSS}_Y'$ is $\mathcal{LSS}_Y$. It suffices to show that every $\mathcal{B} \in \mathcal{LSS}_Y$ refines some collection in $\mathcal{LSS}_Y'$. 

Let $\mathcal{B}_Y \in \mathcal{LSS}_Y$. We have $\mathcal{B}_X \in \mathcal{LSS}_X$ such that for each $B \in \mathcal{B}_Y$ there is $B_1,B_2, \hdots B_n \in \mathcal{B}_X$ with $B \subset \bigcup_{i=1}^nB_i$ such that $st^n(f(U_i),f(\mathcal{V})) \supset V$ for each $1\leq i \leq n$.  Let $\mathcal{B}_X' \in \mathcal{LSS}_X'$ be a coarsening of $\mathcal{B}_X$. Notice that $st^n(f(\mathcal{B}_X'))$ coarsens the cover $\mathcal{B}_Y$ and $st^n(f(\mathcal{B}_X')) \in \mathcal{LSS}_Y'$ which completes the claim.
\end{proof}

\begin{Question}
Let $X$ and $Y$ be large scale structures and $f:X \to Y$ coarse. Is $f$ is coarsely $n$-to-$1$ if and only if for every uniformly bounded cover $\mathcal{U}$ there is a uniformly bounded cover $V$ of $X$ such that for each element $U \in \mathcal{U}$ there exists $U_1,U_2,\hdots,U_n \in \mathcal{U}$ such that $st^n(f(U_i),f(\mathcal{V})) \supset V$ for each $1\leq i \leq n$?
\end{Question}

\begin{Proposition}
Let $X$ and $Y$ be large scale structures and $f:X\to Y$ a function. $f$ is discretely $n$-to-$1$ if and only if $f$ is coarsely $n$-to-$1$.
\end{Proposition}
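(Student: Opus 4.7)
The direction $(\Rightarrow)$ is immediate: given $\mathcal{U}_Y$, apply the discrete hypothesis with $\mathcal{W}_X$ taken to be the trivial extension of singletons, and then forget the $\mathcal{W}_X$-discreteness to read off the coarse $n$-to-$1$ condition. So the content is in $(\Leftarrow)$, and my plan is to mimic the $R$-lattice argument of Proposition \ref{Discrete}, replacing metric proximity by proximity relative to $\mathcal{W}_X$.

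Fix uniformly bounded covers $\mathcal{U}_Y$ of $Y$ and $\mathcal{W}_X$ of $X$. Using that $f$ is coarsely $n$-to-$1$, extract a uniformly bounded cover $\mathcal{U}_X$ of $X$ such that for each $B \in \mathcal{U}_Y$ there exist $B^B_1, \ldots, B^B_n \in \mathcal{U}_X$ with $f^{-1}(B) \subset \bigcup_{i=1}^n B^B_i$. For each such $B$, form the equivalence relation $\approx$ on $\mathcal{F}_B = \{B^B_1, \ldots, B^B_n\}$ generated by $B^B_i \approx B^B_j$ whenever $st(B^B_i, \mathcal{W}_X) \cap st(B^B_j, \mathcal{W}_X) \neq \emptyset$, partition $\mathcal{F}_B$ into $\approx$-classes $\mathcal{V}^B_1, \ldots, \mathcal{V}^B_{k_B}$ (with $k_B \leq n$), and set $C^B_i = \bigcup \mathcal{V}^B_i$.

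By construction $\{C^B_1, \ldots, C^B_{k_B}\}$ still covers $f^{-1}(B)$, and it is $\mathcal{W}_X$-discrete: since $st(C^B_i, \mathcal{W}_X) = \bigcup_{B_p \in \mathcal{V}^B_i} st(B_p, \mathcal{W}_X)$, a point common to $st(C^B_i, \mathcal{W}_X)$ and $st(C^B_j, \mathcal{W}_X)$ would exhibit $B_p \in \mathcal{V}^B_i$ and $B_q \in \mathcal{V}^B_j$ with $B_p \approx B_q$, contradicting the partition. The candidate cover for the discrete definition is then $\mathcal{U}_X' = \{C^B_i : B \in \mathcal{U}_Y,\ 1 \leq i \leq k_B\}$, augmented by the trivial extension of singletons so as to cover $X$.

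The main obstacle is showing that $\mathcal{U}_X'$ actually lies in $\mathcal{LSS}_X$. I would handle this by choosing a uniformly bounded cover $\mathcal{T}$ that coarsens both $\mathcal{U}_X$ and $\mathcal{W}_X$ (for instance $st(\mathcal{U}_X, \mathcal{W}_X)$ after trivial extension), and then tracking that each $C^B_i$ is a union of at most $n$ elements of $\mathcal{U}_X$ joined by a chain whose successive links share a point through some element of $\mathcal{W}_X$. A routine bookkeeping then places each $C^B_i$ inside a single element of a fixed iterated star $st^N(\mathcal{T})$, with $N$ depending only on $n$; by the closure axiom for large scale structures this iterated star lies in $\mathcal{LSS}_X$. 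Hence $\mathcal{U}_X'$ refines a uniformly bounded cover, is itself uniformly bounded, and supplies the $\mathcal{W}_X$-discrete $n$-fold subcovers required to witness that $f$ is discretely $n$-to-$1$.
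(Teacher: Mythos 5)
Your argument is correct, but it takes a genuinely different route from the paper. The paper's proof of the nontrivial direction is a metrization reduction: it builds countable large scale bases (the iterated stars $st^k(\mathcal{U})$ on $Y$ and finite stars of $\{\mathcal{W}_X,\mathcal{V}_k\}$ on $X$), invokes Proposition \ref{metrizability} to obtain auxiliary metrics $d'$ on $X$ and $d$ on $Y$ for which $f$ is still coarsely $n$-to-$1$, and then quotes the metric-space statement (Proposition \ref{Discrete}) to produce the discrete witnesses, finally observing that covers uniformly bounded in the auxiliary metric structure are uniformly bounded in $\mathcal{LSS}_X$. You instead inline the lattice argument directly in the general large scale setting: you chain together the at most $n$ sets covering $f^{-1}(B)$ via the equivalence relation generated by $st(\cdot,\mathcal{W}_X)\cap st(\cdot,\mathcal{W}_X)\neq\emptyset$, take unions over classes, and verify $\mathcal{W}_X$-discreteness using $st(\bigcup_p B_p,\mathcal{W}_X)=\bigcup_p st(B_p,\mathcal{W}_X)$. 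Your approach is more self-contained and arguably cleaner: it avoids the metrization machinery entirely and, in particular, sidesteps the need to check that $R$-disjointness in the auxiliary metric translates back into $\mathcal{W}_X$-discreteness in the original structure. The one step you leave as ``routine bookkeeping'' --- that each class union $C^B_i$ sits inside an element of a fixed iterated star $st^N(\mathcal{T})$ with $N$ depending only on $n$ --- is genuinely routine here, since a chain of at most $n$ links, each realized by two overlapping elements of $\mathcal{W}_X$ bridging two elements of $\mathcal{U}_X$, is absorbed by finitely many star operations applied to the trivial extension of $st(\mathcal{U}_X,\mathcal{W}_X)$, and such iterated stars remain uniformly bounded by the closure axiom. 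What the paper's route buys is brevity on the page (it reuses Proposition \ref{Discrete}); what yours buys is transparency and independence from the metric case.
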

\begin{proof}
$(\Rightarrow)$ is clear.

$(\Leftarrow)$ Let $\mathcal{U}$ be a uniformly bounded cover of $Y$ and $\mathcal{W}_X$ a uniformly bounded cover of $X$. Let $\mathcal{V}_0$ be a uniformly bounded cover of $X$ such that for every $B\in \mathcal{U}$ there exists $B_1,B_2, \hdots, B_n \in \mathcal{V}$ with $f^{-1}(B) \subset \bigcup_{i=1}^n B_i$. Inductively define $\mathcal{V}_{n}$ as a uniformly bounded cover of $X$ such that for each element $U \in st^n(\mathcal{U})$ there exists $U_1,U_2, \hdots, U_n \in \mathcal{V}_n$ with $f^{-1}(U) \subset \bigcup_{i=1}^n U_i$. 

$\{st^n(\mathcal{U})\}$ is a large scale basis for a metrizable large scale structure $(Y,d)$. The collection of all finite stars of elements of the collection $\{\mathcal{W}_X,\mathcal{V}_n:n\ge 0\}$ is a large scale basis for a metrizable large scale structure $(X,d')$. $(X,d')$ and $(Y,d)$ have been designed so that $f:(X,d') \to (Y,d)$ is coarsely $n$-to-$1$ which means, by proposition \ref{Discrete}, that there exists a uniformly bounded cover $\mathcal{W}$ of $(X,d)$ such that  for every $B\in \mathcal{U}$ there exists a $\mathcal{W}_X$-discrete collection $\{B_1,B_2, \hdots, B_n\} \subset \mathcal{U_X}$  with $f^{-1}(B) \subset \bigcup_{i=1}^n B_i$.
\end{proof}

\section{Spaces of Bounded Geometry}

The purpose of this section is show that large scale $n$-to-$1$ maps preserve the property of being of bounded geometry. To cut down on wordiness we will use the following termonology: Let $\mathcal{U}$ be a cover of some set $X$. An $\mathcal{U}$-point in  $X$ is a subset $A\subset X$ which is contained in an element of $\mathcal{U}$.

\begin{Definition}
A large scale space $X$ is said to have $\textbf{bounded geometry}$  if for every uniformly bounded cover $\mathcal{U}$ there is $m(\mathcal{U})>0$ so that each $\mathcal{U}$-point contains no more than $m(\mathcal{U})$ elements. 
\end{Definition}

Typical examples of bouded geometry spaces of finitely generated groups with the Cayley graph metric. In view of the Svar\v c Milnor Lemma, this gives a wealth of examples of metric spaces $X$ which admit proper and cocompact actions by finitely generated groups.

Recall that a map $f:X\to Y$ of metric spaces is called a \textbf{coarse embedding} if there exists nondecreasing functions $p^+_-:[0,\infty) \to [0,\infty)$ such that $p_-(d(x,y)) \leq d(f(x),f(y)) \leq p^+(d(x,y))$; i.e. $f$ preserves the coarsening of coverings. If is a \textbf{coarse equivalence} if the image of $X$ is an $R$ net in $Y$ for some $R>0$. Note in particular that an inclusion $i:X\xhookrightarrow{} Y$ is a coarse equivalence if and only if $X$ is an $R$-net in $Y$ for some $R>0$.

A reason to consider spaces of bounded geometry is that the the Rips Complex simplicial approxiations are locally finite and hence metrizable. Typically, a metric space $X$ is approximated by Rips complexes of a coarsely equivalent bounded geometry subspace. The main result of this section says that if there exists a large scale $n$-to-$1$ function $f:X\to Y$ then $X$ can be approximated by metrizable Rips complexes of coarsely equivalent subspaces if and only if the same is true for $Y$.

\begin{Theorem}
Let $X$ and $Y$ be large scale spaces and $f:X \to Y$ coarse and large scale $n$-to-1. If $X$ is of bounded geometry then $Y$ is of bounded geometry. Conversely, if $X$ and $Y$ are both metric spaces and if $Y$ is of bounded geometry then there exists a bounded geometry subspace $Z\subset X$ for which the inclusion induces a coarse equivalence from $Z$ to $X$.
\end{Theorem}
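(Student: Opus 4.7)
My plan is a direct counting argument. Given any uniformly bounded cover $\mathcal{U}$ of $Y$, I would invoke the large scale $n$-to-$1$ hypothesis to produce a uniformly bounded cover $\mathcal{V}$ of $X$ such that for each $U \in \mathcal{U}$ the preimage $f^{-1}(U)$ is covered by $n$ members of $\mathcal{V}$. Bounded geometry of $X$ supplies a uniform bound $m$ on the cardinalities of members of $\mathcal{V}$, so $|f^{-1}(U)| \leq nm$, and hence $|U \cap f(X)| = |f(f^{-1}(U))| \leq nm$. Under the usual convention that $f$ is coarsely surjective, this yields a uniform bound on $|U|$ for every $U \in \mathcal{U}$ and hence bounded geometry of $Y$.

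\textbf{Converse direction: construction of $Z$.} The plan is to select at most $n$ points of $X$ above each point of a suitable net in $Y$. Fix $R_0 > 0$, take a maximal $R_0$-separated subset $Y_0 \subset Y$ (which is automatically an $R_0$-net), and apply the large scale $n$-to-$1$ hypothesis to the cover $\mathcal{U}_Y = \{B_Y(y, R_0) : y \in Y_0\}$. This gives a uniformly bounded cover $\mathcal{V}_X$ of $X$ of mesh at most some $D$ such that for each $y \in Y_0$ the preimage $f^{-1}(B_Y(y, R_0))$ is covered by $n$ members of $\mathcal{V}_X$. For each $y \in Y_0$ I would fix such a choice of $n$ members and pick a representative $z_i^y$ in the intersection of the $i$-th of them with $f^{-1}(B_Y(y, R_0))$ (skipping those whose intersection is empty). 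Then $Z$ is defined to be the collection of all these $z_i^y$.

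\textbf{Verification and the main obstacle.} To see that $Z \hookrightarrow X$ is a coarse equivalence one checks that $Z$ is a $D$-net: given $x \in X$, some $y \in Y_0$ satisfies $f(x) \in B_Y(y, R_0)$, so $x$ and some $z_i^y$ lie in a common member of $\mathcal{V}_X$ of diameter at most $D$. For bounded geometry of $Z$, fix $z = z_k^{y_0} \in Z$ and $T > 0$, let $\rho$ be a control function for $f$, and note that $z_j^{y'} \in B_Z(z, T)$ forces $d(f(z_j^{y'}), f(z)) \leq \rho(T)$; combined with $d(f(z_j^{y'}), y') \leq R_0$ and $d(f(z), y_0) \leq R_0$, the triangle inequality gives $d(y', y_0) \leq 2R_0 + \rho(T)$. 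Bounded geometry of $Y$ then bounds the number of admissible $y' \in Y_0$, and multiplying by $n$ bounds $|B_Z(z, T)|$. The delicate point — which I expect to be the main obstacle — is that the representatives $z_i^y$ must be chosen \emph{inside} $f^{-1}(B_Y(y, R_0))$ and not merely inside the chosen $\mathcal{V}_X$-set: this is what anchors $f(z_i^y)$ near $y$ and allows bounded geometry of $Y$ to control the number of admissible $y'$ in the argument for $Z$.
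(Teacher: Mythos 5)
Your proposal is correct and follows essentially the same route as the paper: the forward direction is the same counting argument (the paper likewise implicitly needs $f$ to hit every point of $Y$, a point you at least flag), and the converse builds $Z$ by selecting one representative from each of the $n$ covering sets of $f^{-1}(B(y,R_0))$ and then counting via bounded geometry of $Y$. The only cosmetic differences are that you index over a separated net $Y_0$ and control distances with the coarseness function $\rho$, where the paper indexes over all of $Y$ and invokes coarse doubling; your care in choosing representatives inside $f^{-1}(B(y,R_0))$ is a genuine (minor) improvement in precision over the paper's wording.
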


\begin{proof}
($\Rightarrow$) Let $\mathcal{V}$ be a uniformly bounded cover of $Y$ and let $\mathcal{U}$ be a uniformly bounded cover of $X$ such that point inverses of elements of $\mathcal{V}$ can be covered by at most $n$ elements of $\mathcal{U}$. There exists an upper bound $m(\mathcal{U})$ to the number of elements of a set in $\mathcal{U}$. Notice then that the cardinality of an element of $\mathcal{V}$ is bounded above by $n\cdot m(\mathcal{U})$.

($\Leftarrow$) 
Given $R>0$ there exists $m>0$ such that the inverse image of $R$-points in $Y$ can be covered by $n$ $m$-points in $X$. For each $y\in Y$ we define the subset $R_y \subset X$ to be a set containing one point from each of the $m$-points covering $f^{-1}(B(y,R))$.

Consider $Z = \bigcup_{y\in Y} R_y$. To see that $Z$ is of bounded geometry, let $n>0$ and $\mathcal{U}_n$ be the uniformly bounded cover of $Z$ by $n$-balls and consider $|U|$, the cardinality of $U$, for some $U\in \mathcal{U}_n$. Notice that $f$ takes $\mathcal{U}_n$ to a uniformly bounded cover of $Y$ which can be coarsened by the uniformly bounded cover of $Y$ by $S$-balls for some $S > R$. $Y$ is of bounded geometry(and hence is coarsely doubling, see \cite{CDV1}) which means that there is $l(S)>0$ for which each ball of radius $S$ can be covered by at most $l(S)$ $R$-points. Let $p(R)$ be the maximum number of elements contained in a $R$-point in $Y$. Thus the image of $U$ lies in a $S$-ball which has has at most $l(S)\cdot p(R)$ points. If follows that $U$ contains at most $n \cdot l(S) \cdot p(R)$ points and so $Z$ is of bounded geometry.

To see that $Z$ is coarsely equivalent to $X$, just observe that $Z$ forms a net in $X$. Indeed, every element of the cover by $2m$ balls of $X$ contains a point of $Z$.
\end{proof} 

\section{Asymptotic Dimension and Finitism}
The purpose of this section is to translate some ideas from \cite{MV} into the language of large scale structures. We will motivate our definition of asymptotic dimension with the metric case. Let $X$ be metric and $n \ge 0$ an integer. 

1) $X$ is said to have \textbf{asymptotic dimension at most $n$} provided that for each $R>0$ there exists a uniformly bounded cover of $X$ with Lebesgue number greater than $R$ and having multiplicity at most $n+1$. 

Following \cite{MV}, we opt for different definition of asymptotic dimension.

2) $X$ is said to have \textbf{asymptotic dimension at most $n$} provided that for each $R>0$ there exists a uniformly bounded cover $\mathcal{V} = \bigcup_{i=1}^{n+1}\mathcal{V}_i$ where $\mathcal{V}_i$ is an $R$-disjoint family for each $i=1,2,\hdots, n+1$.

Again, based on the equivalence of the previous two definitions in the metric case, it makes sense to define two notions of asymptotic dimension for large scale structures. 

\begin{Definition}
A large scale structure $X$ is said to have \textbf{asymptotic dimension} at most $n$, denoted by $asdim(X) \leq n$ if For every uniformly bounded cover $\mathcal{U}$ of $X$ there exists a uniformly bounded cover $\mathcal{V}$ coarsening $\mathcal{U}$ with multiplicity at most $n+1$.

A large scale structure $X$ is said to be \textbf{large scale finitistic} if for every uniformly bounded cover $\mathcal{U}$ of $X$ there exists $m \ge 1$ and a uniformly bounded cover $\mathcal{V}$ which coarsens $\mathcal{U}$ with multiplicity at most $m$.
\end{Definition}

The forward direction of the following proof is an adaptation of the proof for metric spaces found in \cite{BD}. There G. Bell and A. Dranishnikov prove that these two definitions coincide for the case of metric spaces. The idea for the converse was suggested to the author by Jerzy Dydak. We obtain that the two criterion aforementioned for metric spaces are also equivalent in the class of general large scale structures.

\begin{Theorem}[Ostrand Characterization]
Let $(X,\mathcal{LSS}_X)$ be a  large scale structure and $\mathcal{U}$ a uniformly bounded cover of $X$. The following are equivalent:

1) There exists a uniformly bounded cover $\mathcal{V} = \bigcup_{i = 1}^{n+1} \mathcal{V}_i$ where $st(\mathcal{V}_i,\mathcal{U})$ is a disjoint collection for each $i = 1,2,\hdots , n$.

2) There exists a uniformly bounded cover $\mathcal{V}$ which coarsens $\mathcal{U}$ with multiplicity at most $n+1$.

\end{Theorem}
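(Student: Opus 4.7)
The plan is to treat the two implications asymmetrically. The direction $(1) \Rightarrow (2)$ is a direct one-step construction in the spirit of Bell--Dranishnikov, with $R$-balls replaced throughout by $\mathcal{U}$-stars, while $(2) \Rightarrow (1)$ reduces to the metric Bell--Dranishnikov theorem via the metrization criterion recorded in Proposition~\ref{metrizability}.

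For $(1) \Rightarrow (2)$ (read with the natural range $i = 1, 2, \ldots, n+1$, since otherwise the last subfamily could contribute uncontrolled multiplicity), I take the candidate $\mathcal{W} := \bigcup_{i=1}^{n+1} st(\mathcal{V}_i, \mathcal{U})$. Since $\mathcal{V}, \mathcal{U} \in \mathcal{LSS}_X$, the large scale axioms give $st(\mathcal{V}, \mathcal{U}) \in \mathcal{LSS}_X$, and $\mathcal{W}$ is a subfamily, hence uniformly bounded. It covers $X$ because $\mathcal{V}$ does, and it coarsens $\mathcal{U}$: any $U \in \mathcal{U}$ meets some $V \in \mathcal{V}_i$, so $U \subset st(V, \mathcal{U})$. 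For multiplicity, each point lies in at most one member of each disjoint collection $st(\mathcal{V}_i, \mathcal{U})$ and hence in at most $n+1$ members of $\mathcal{W}$.

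For $(2) \Rightarrow (1)$, I would follow Dydak's suggestion and push the problem into the metric category. Form the countable set of covers $\mathcal{LSS}' := \{st^k(\mathcal{U} \cup \mathcal{V}) : k \ge 0\}$; Proposition~\ref{LargeScaleBasis} ensures it is a countable large scale basis for a sub-structure $\mathcal{LSS}'' \subset \mathcal{LSS}_X$, and Proposition~\ref{metrizability} supplies a compatible $\infty$-metric $d$. In $(X, d)$ the cover $\mathcal{V}$ still has multiplicity at most $n+1$ and coarsens $\mathcal{U}$, while $\mathcal{U}$ has finite $d$-mesh. Applying the metric Bell--Dranishnikov theorem \cite{BD} at a scale $R$ larger than the $d$-mesh of $\mathcal{U}$ yields a uniformly bounded cover $\bigcup_{i=1}^{n+1} \mathcal{V}_i$ in which each $\mathcal{V}_i$ is $R$-disjoint. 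Since any $U \in \mathcal{U}$ has $d$-diameter less than $R$, it cannot meet two distinct members of a single $\mathcal{V}_i$, which is exactly the assertion that $st(\mathcal{V}_i, \mathcal{U})$ is disjoint. Uniform boundedness of the $\mathcal{V}_i$ in $\mathcal{LSS}''$ transfers to $\mathcal{LSS}_X$ since $\mathcal{LSS}'' \subset \mathcal{LSS}_X$.

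The delicate step is arranging the metric $d$ so that $\mathcal{V}$ has $d$-Lebesgue number comfortably exceeding the $d$-mesh of $\mathcal{U}$; this is what legitimately powers the Bell--Dranishnikov input. The bare coarsening relation $\mathcal{V} \ge \mathcal{U}$ only says that each individual $U$ lies in some $V$, not that every $R$-ball does. I would close this gap by inserting intermediate iterated stars (such as $st^k(\mathcal{U})$) into the generating family, widening the scale separation between $\mathcal{U}$ and $\mathcal{V}$ inside $\mathcal{LSS}''$ until the Lebesgue-to-mesh ratio suffices. This metric bookkeeping, rather than any new combinatorial subtlety, is where the principal difficulty of the converse resides.
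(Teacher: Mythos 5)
Your $(1)\Rightarrow(2)$ argument is correct and is essentially the paper's: pass to $st(\mathcal{V},\mathcal{U})$, check it is uniformly bounded and coarsens $\mathcal{U}$, and use disjointness of each $st(\mathcal{V}_i,\mathcal{U})$ to bound the multiplicity by $n+1$ (your reading of the index range as $1,\dots,n+1$ is the intended one). Your $(2)\Rightarrow(1)$ also follows the paper's general strategy --- metrize a countable substructure and quote the metric Bell--Dranishnikov/Ostrand theorem --- but it has a genuine gap exactly at the step you flag as ``delicate,'' and the fix you propose does not close it. The metric theorem needs $\mathrm{asdim}(X,d)\le n$, i.e.\ multiplicity-$(n+1)$ covers at \emph{every} scale of the metric structure, and even its single-scale form (the coloring/shrinking lemma) needs a cover whose Lebesgue number exceeds the target disjointness scale by a factor of roughly $n+1$. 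Your generating family $\{st^k(\mathcal{U}\cup\mathcal{V})\}$ contains only one cover of controlled multiplicity, namely $\mathcal{V}$ itself, and the hypothesis ``$\mathcal{V}$ coarsens $\mathcal{U}$'' is a statement at the scale of $\mathcal{U}$ only: it does not imply that $\mathcal{V}$ coarsens $st(\mathcal{U})$, so in any compatible metric the Lebesgue number of $\mathcal{V}$ is comparable to the mesh of $\mathcal{U}$, not larger. Inserting intermediate covers $st^k(\mathcal{U})$ into the basis merely relabels scales; it cannot manufacture the relation $\mathcal{V}\ge st^k(\mathcal{U})$, which is what a large Lebesgue-to-mesh ratio would amount to. So the Bell--Dranishnikov input is not legitimately available in your metric model.

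The missing idea is that hypothesis $(2)$ must be applied \emph{infinitely many times}, to a sequence of covers rather than to $\mathcal{U}$ alone (so the theorem has to be read with $(2)$ quantified over all uniformly bounded covers, which is how the paper's proof in fact uses it). The paper sets $\mathcal{U}_0=\mathcal{U}$, chooses $\mathcal{U}_1$ a multiplicity-$(n+1)$ coarsening of $\mathcal{U}_0$, puts $\mathcal{U}_2=st(\mathcal{U}_1,\mathcal{U}_0)$, and inductively lets $\mathcal{U}_k$ be a multiplicity-$(n+1)$ coarsening of $st(\mathcal{U}_{k-1},\mathcal{U}_{k-2})$. The resulting nested countable family is a large scale basis for a metrizable substructure in which multiplicity-$(n+1)$ covers occur cofinally at all scales, so that metric space genuinely has asymptotic dimension at most $n$; only then does the metric theorem produce the $\mathcal{U}$-disjoint decomposition, which is uniformly bounded in $\mathcal{LSS}_X$ because every $\mathcal{U}_i$ is. Your proof becomes correct once you replace the single-$\mathcal{V}$ basis with such an iterated hierarchy.
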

\begin{proof}
($\Rightarrow$) Let $\mathcal{U}$ be a uniformly bounded cover of $X$. Choose a uniformly bounded cover $\mathcal{V}= \bigcup_{i = 1}^{n+1} \mathcal{V}_i$ where $st(\mathcal{V}_i,st(\mathcal{U},\mathcal{U}))$ is a disjoint collection for each $i = 1,2,\hdots , n$. Consider the cover $\mathcal{W} = st(\mathcal{V},\mathcal{U})$. This is a coarsening of $\mathcal{U}$ and we claim that the multiplicity is at most $n+1$. Notice that $st(\mathcal{V}_i,\mathcal{U})$ is disjoint for each $i = 1,2,\hdots n$ which means that each element of $X$ lies in at most one element of $st(\mathcal{V}_i,\mathcal{U}) \subset \mathcal{W}$. It follows that each element of $X$ belongs to at most $n+1$ elements of $\mathcal{W}$.

$(\Leftarrow)$ Let $\mathcal{U}$ be a uniformly bounded cover of $X$. We need to construct a metric model that captures the dimension for $X$ in terms of $\mathcal{U}$. Let $\mathcal{U}_0 = \mathcal{U}$. Let $\mathcal{U}_1$ be a uniformly bounded cover of $X$ which coarsens $\mathcal{U}_0$ and has multiplicity at most $n+1$. Define $\mathcal{U}_2 = st(\mathcal{U}_1,\mathcal{U}_0)$. Notice that this cover coarsens both $\mathcal{U}_0$ and $\mathcal{U}_1$. Let $\mathcal{U}_3$ be a uniformly bounded cover of $X$ which coarsens $\mathcal{U}_2$ and has multiplicity at most $n+1$. Continue as follows: for $k \ge 3$ define $U_{k}$ as a uniformly bounded cover of $X$ which coarsens $st(\mathcal{U}_{k-1},\mathcal{U}_{k-2})$ and has multiplicity at most $n+1$. Observe that $\mathcal{U}_{i+1}$ coarsens $\mathcal{U}_{i}$ for $i \ge 0$.

$\underline{Claim:}$ The collection $\{\mathcal{U}_i: i \ge 0\}$ is a large scale basis for a metric large scale structure on $X$ with asymptotic dimension at most $n$. Furthermore, the uniformly bounded covers of this metric large scale structure generated by $\mathcal{U}$ are uniformly  bounded in $(X,\mathcal{LSS}_X)$
\begin{proof}[proof of claim:]
Indeed, let $k > l \ge 0$ be integers and notice that $\mathcal{U}_k$ and $\mathcal{U}_l$ has the property that $\mathcal{U}_k \cup \mathcal{U}_l \cup st(\mathcal{U}_l,\mathcal{U}_k) $ refines $\mathcal{U}_{k+1}$. To see this, observe that $\mathcal{U}_l$ is coarsened by $\mathcal{U}_{k-1}$ and we have $\mathcal{U}_{k+1}$ coarsens $st(\mathcal{U}_{k},\mathcal{U}_{k-1})$ which in turn coarsens $st(\mathcal{U}_{k},\mathcal{U}_{l})$. In this case, $st(\mathcal{U}_{k},\mathcal{U}_{l})$ coarsens $st(\mathcal{U}_{k},\mathcal{U}_{l}) \cup \mathcal{U}_k \cup \mathcal{U}_l$ because the collection $\{\mathcal{U}_i: i \ge 0\}$ is nested. 

$\{\mathcal{U}_i: i \ge 0\}$ is countable and so generates a metric large scale structure by proposition \ref{metrizability}. Use the fact that every element of the large scale basis generating this metric structure  has multiplicity at most $n+1$  to see that it has dimension at most $n$.
\end{proof}

$X$ with the metric structure generated by $\mathcal{U}$ is metric of dimension at most $n$. There exists a uniformly bounded cover $\mathcal{V} = \bigcup_{i = 1}^{n+1} \mathcal{V}_i$ of $X$ with the metric structure where $st(\mathcal{V}_i,\mathcal{U})$ is a disjoint collection for each $i = 1,2,\hdots , n$. $\mathcal{V}$ is uniformly bounded in $(X,\mathcal{LSS}_X)$ which completes the proof.

\end{proof}

\begin{Remark}
\v Ziga Virk and I are finishing up a work \cite{AZ} in which we formalize the method in the previous proof.
\end{Remark}

\begin{Theorem}\label{asdim}
Let $f:X\to Y$  be coarse and coarsely $n$-to-$1$. $X$ has finite asymptotic dimension if and only if $Y$ has finite asymptotic dimension. 
\end{Theorem}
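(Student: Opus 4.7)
The proof splits into two implications, both exploiting the Ostrand Characterization proved earlier in this section; the $(\Leftarrow)$ direction is done directly by pulling an Ostrand decomposition back through $f$, while the $(\Rightarrow)$ direction sidesteps a multiplicity obstruction by reducing to the metric setting via the countable-basis criterion (Proposition \ref{metrizability}) and invoking the Miyata-Virk metric theorem.

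For $(\Leftarrow)$, assume $\asdim(Y)\le m$. Given a uniformly bounded cover $\mathcal{U}$ of $X$, the image $f(\mathcal{U})$ lies in $\mathcal{LSS}_Y$ since $f$ is coarse. The Ostrand Characterization applied to $\asdim(Y)\le m$ and $f(\mathcal{U})$ yields a uniformly bounded decomposition $\mathcal{V}=\bigcup_{i=1}^{m+1}\mathcal{V}_i$ of $Y$ with each $st(\mathcal{V}_i,f(\mathcal{U}))$ disjoint. Applying the coarsely $n$-to-$1$ property to $\mathcal{V}$ produces a uniformly bounded $\mathcal{W}$ on $X$ and, for each $V\in\mathcal{V}$, elements $W^V_1,\ldots,W^V_n\in\mathcal{W}$ with $f^{-1}(V)=\bigcup_j W^V_j$ (by the proposition following the definition of coarsely $n$-to-$1$); intersecting with $f^{-1}(V)$ we may arrange $W^V_j\subseteq f^{-1}(V)$. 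Setting $\mathcal{W}_{i,j}=\{W^V_j:V\in\mathcal{V}_i\}$, the $n(m+1)$ families jointly cover $X$ and refine $\mathcal{W}$, hence are uniformly bounded. Each $\mathcal{W}_{i,j}$ is $\mathcal{U}$-discrete: a point $x$ in $st(W^V_j,\mathcal{U})\cap st(W^{V'}_j,\mathcal{U})$ with $V\neq V'$ in $\mathcal{V}_i$ would be witnessed by $U_1,U_2\in\mathcal{U}$ with $U_l\ni x$ and $U_1\cap W^V_j,\; U_2\cap W^{V'}_j$ nonempty; then $f(U_1),f(U_2)\in f(\mathcal{U})$ meet $V$ and $V'$ respectively, so $f(x)\in st(V,f(\mathcal{U}))\cap st(V',f(\mathcal{U}))$, contradicting disjointness. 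The Ostrand Characterization then gives $\asdim(X)\le n(m+1)-1$, which is finite.

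For $(\Rightarrow)$, assume $\asdim(X)\le k$. The naive pushforward of an Ostrand decomposition of $X$ to $Y$ does not bound the multiplicity on $Y$, because $f$ may identify points lying in many different Ostrand pieces of $X$ with a single point of $Y$ (in the absence of bounded geometry the number of $\mathcal{V}'$-elements meeting a fiber $f^{-1}(y)$ has no a priori bound). We instead mimic the metric-reduction argument used in the proof that discretely $n$-to-$1$ equals coarsely $n$-to-$1$. Given a uniformly bounded $\mathcal{B}$ on $Y$, inductively build countable families $\mathcal{C}_\bullet\subseteq\mathcal{LSS}_Y$ and $\mathcal{D}_\bullet\subseteq\mathcal{LSS}_X$ starting from $\mathcal{C}_0=\{\mathcal{B}\}$: at stage $i+1$, add to $\mathcal{D}_{i+1}$ the coarsely $n$-to-$1$ pullback of each element of $\mathcal{C}_i$ together with a multiplicity-$(k+1)$ coarsening of each element of $\mathcal{D}_i$ (available since $\asdim(X)\le k$); add to $\mathcal{C}_{i+1}$ the $f$-images of the new members of $\mathcal{D}_i$; and close both families under finite stars so as to satisfy the large-scale-basis axioms.

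The resulting countable bases generate, by Proposition \ref{metrizability}, metric large scale structures $\mathcal{LSS}''_X\subseteq\mathcal{LSS}_X$ and $\mathcal{LSS}''_Y\subseteq\mathcal{LSS}_Y$ in which $f$ remains coarse and coarsely $n$-to-$1$, $\mathcal{B}\in\mathcal{LSS}''_Y$, and $\asdim$ of $X$ in the new structure is still at most $k$. The Miyata-Virk theorem for metric spaces \cite{MV} then yields a finite bound (depending only on $n$ and $k$) on $\asdim(Y)$ in $\mathcal{LSS}''_Y$, so in particular $\mathcal{B}$ admits a uniformly bounded coarsening of bounded multiplicity inside $\mathcal{LSS}''_Y$, which is a fortiori uniformly bounded in $\mathcal{LSS}_Y$. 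Since $\mathcal{B}$ was arbitrary, $\asdim(Y)$ is finite. The delicate point is the inductive bookkeeping: each newly added cover spawns further pullbacks, images, stars and coarsenings that must themselves be included, so one must interleave the closure operations carefully to keep the families countable while preserving the coarseness and $n$-to-$1$ properties of $f$ between the two induced metric structures.
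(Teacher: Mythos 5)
Your backward implication is essentially the paper's argument and is correct: pull the Ostrand decomposition of $Y$ relative to $f(\mathcal{U})$ back through $f$, split each preimage into $n$ uniformly bounded pieces, and verify that each of the resulting $n(m+1)$ families is $\mathcal{U}$-discrete. The forward implication is where the problem lies. You abandon the direct pushforward on the grounds that the number of Ostrand pieces of $X$ meeting a fiber $f^{-1}(y)$ has no a priori bound; but that number \emph{is} bounded by $n$, provided the Ostrand decomposition of $X$ is chosen \emph{after} pulling $\mathcal{U}$ back, and this is exactly what the paper does. Given a uniformly bounded cover $\mathcal{U}$ of $Y$, first take $\mathcal{W}$ on $X$ so that the preimage of every $st(\mathcal{U})$-point of $Y$ is covered by at most $n$ elements of $\mathcal{W}$, and only then take $\mathcal{V}=\bigcup_{i=1}^{m+1}\mathcal{V}_i$ with each $\mathcal{V}_i$ $\mathcal{W}$-disjoint. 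If $y\in st(f(V),\mathcal{U})$ for some $V\in\mathcal{V}_i$, then $V$ meets $f^{-1}(st(y,\mathcal{U}))$, which is covered by $n$ elements of $\mathcal{W}$; disjointness of $st(\mathcal{V}_i,\mathcal{W})$ forbids two members of $\mathcal{V}_i$ from meeting the same one of those $n$ sets, so at most $n$ members of $\mathcal{V}_i$ contribute at $y$. Hence $st(f(\mathcal{V}),\mathcal{U})$ coarsens $\mathcal{U}$ with multiplicity at most $n(m+1)$. (Both your version and the paper's quietly need $f$ to be coarsely surjective here, since otherwise this cover need not cover $Y$ and the implication is in fact false; that hypothesis is missing from the statement itself.)

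Your substitute route --- building countable star-closed families on both sides, metrizing via Proposition \ref{metrizability}, and quoting the Miyata--Virk metric theorem --- is plausible in principle, and indeed the paper uses the same metrization device in the Ostrand characterization and in the discretely-$n$-to-$1$ proposition. But as written it is only a strategy: the ``delicate inductive bookkeeping'' you defer is the entire content of that argument, and you would still need to verify that $f$ remains coarse and coarsely $n$-to-$1$ between the induced metric structures, that $\asdim\le k$ persists on the domain side, and that the hypotheses of the cited metric theorem (in particular surjectivity) are met. Since the direct four-line argument above is available and self-contained, the detour is both incomplete and unnecessary.
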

\begin{proof}
$(\Rightarrow)$ Say, $asdim_d(X) \leq m-1$. Let $\mathcal{U}$ be a uniformly bounded cover of $Y$. Let $\mathcal{W}$ be a uniformly bounded cover of $X$ such that the preimage of element of $\mathcal{U}$ can be covered by at most $n$ elements of $\mathcal{W}$. Let $\mathcal{V} = \bigcup_{i=0}^{m}\mathcal{V}_i$ be a cover of $X$ such that $\mathcal{V}_i$ is $\mathcal{W}$-disjoint for each $i = 0,1,2,\hdots,m$. Consider the cover $\mathcal{W}_0 := st(f(\mathcal{V}),\mathcal{U})$ of $Y$. We claim that this cover has multiplicity bounded by $n\cdot m$. Consider the multiplicity of $st(f(\mathcal{V}_i),\mathcal{U})$. Let $y \in Y$ and consider how many elements of $st(f(\mathcal{V}_i),\mathcal{U})$ could contain $y$. $f^{-1}(y)$ can be covered by at most $n$ elements of $\mathcal{W}$ and $\mathcal{V}_i$ is $\mathcal{W}-$disjoint which means that $y$ could only belong to at most $n$ elements of $st(f(\mathcal{V}_i),\mathcal{U})$. Notice that this implies $y$ belongs to at most $n\cdot m$ elements of $\mathcal{W}_0$.

$(\Leftarrow)$ Say $asdim_d(Y) \leq m-1$. Let $\mathcal{U}$ be a uniformly bounded cover of $X$. Then $f(\mathcal{U})$ is a uniformly bounded cover of $Y$ and so we can find a cover $\mathcal{V} = \bigcup_{i=1}^{m}\mathcal{V}_i$ where $\mathcal{V}_i$ is $f(\mathcal{U})$-disjoint. Let $\mathcal{W}_0$ be a uniformly bounded cover of $X$ such that for every $B\in \mathcal{V}$ there exists $B_1,B_2,\hdots, B_n \in \mathcal{W}_0$ with $f^{-1}(B) = \bigcup_{i=1}^n B_i$. Let $\mathcal{W}_i = \{B_1,B_2,\hdots,B_n: B\in \mathcal{V}_i\}$ and let $\mathcal{W} = \bigcup_{i=1}^{n}\mathcal{W}_i$ 

Consider $\mathcal{B} = st(\mathcal{W},\mathcal{U}) = \bigcup_{i=1}^n st(\mathcal{W}_i,\mathcal{U})$; it is a coarsening of $\mathcal{U}$. We need only show it has bounded multiplicity. Fix $i\in \{1,2,\hdots n\}$ and consider the multiplicity of $st(\mathcal{W}_i,\mathcal{U})$. Notice that for distinct $A,B \in \mathcal{V}_i$ their preimages $A_i$ and $B_j$ are $\mathcal{U}$ disjoint for $i,j =1,2,\hdots n.$ It follows that the multiplicity of $st(\mathcal{W}_i,\mathcal{U})$ is at most $n$. Thus the multiplicity of $\mathcal{B}$ is at most $mn$ and so $asdim(X) \leq m\cdot n$.
\end{proof}

The proof of \ref{asdim} could be used verbatim to prove the following

\begin{Theorem}
Let $f:X\to Y$  be coarse and coarsely $n$-to-$1$. $X$ is large scale finitistic if and only if $Y$ is large scale finitistic. 
\end{Theorem}

\section{Invariance of Large Scale Weak Paracompactness}

In \cite{CDV1} the authors show that one can define a metric space $X$ to be large scale weakly paracompact if for every uniformly bounded cover $\mathcal{U}$ of $X$ there exists a uniformly bounded cover $\mathcal{V}$ so that each element of $\mathcal{U}$ intersects at most finitely many elements of $\mathcal{V}$. The following definition of large scale weak paracompactness is a natural extension of this idea.

\begin{Definition}
An ls space $X$ is \textbf{large scale weakly paracompact} for every uniformly bounded cover $\mathcal{U}$ of $X$ there exists a uniformly bounded cover $\mathcal{V}$ so that each element of $\mathcal{U}$ intersects at most finitely many elements of $\mathcal{V}$.
\end{Definition}

\begin{Theorem}
Let $X$ and $Y$ be spaces and $f:X \to Y$ coarse and coarsely finite-to-$1$. $X$ is large scale weakly paracompact then so is $Y$.
\end{Theorem}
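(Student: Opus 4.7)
The plan is to pull $\mathcal{U}_Y$ back through $f$, apply the paracompactness of $X$ to the pulled-back cover, and then push the resulting witness forward again by $f$. Concretely, let $\mathcal{U}_Y$ be any uniformly bounded cover of $Y$. Since $f$ is coarsely $n$-to-$1$ for some $n$, the definition supplies a uniformly bounded cover $\mathcal{U}_X$ of $X$ such that each $B \in \mathcal{U}_Y$ admits $B_1,\ldots,B_n \in \mathcal{U}_X$ with $f^{-1}(B) \subset B_1 \cup \cdots \cup B_n$. Applying the large scale weak paracompactness of $X$ to $\mathcal{U}_X$, produce a uniformly bounded cover $\mathcal{V}_X$ of $X$ such that each element of $\mathcal{U}_X$ meets only finitely many elements of $\mathcal{V}_X$.

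Take $\mathcal{V}_Y$ to be the trivial extension of $f(\mathcal{V}_X) = \{f(V) : V \in \mathcal{V}_X\}$. It belongs to $\mathcal{LSS}_Y$: $f(\mathcal{V}_X)$ is uniformly bounded because $f$ is coarse, and trivial extensions remain uniformly bounded by the axioms of a large scale structure. The crucial verification is the star-finiteness between $\mathcal{U}_Y$ and $\mathcal{V}_Y$. Fix $B \in \mathcal{U}_Y$. If $V \in \mathcal{V}_X$ has $f(V) \cap B \neq \emptyset$, then picking $y$ in the intersection and $x \in V$ with $f(x) = y$ puts $x \in V \cap f^{-1}(B) \subset V \cap (B_1 \cup \cdots \cup B_n)$, so $V$ meets some $B_i$. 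Since each $B_i \in \mathcal{U}_X$ meets only finitely many elements of $\mathcal{V}_X$, the total number of $V$'s (and hence of distinct images $f(V)$) meeting $B$ is bounded by $n \cdot \max_i |\{V \in \mathcal{V}_X : V \cap B_i \neq \emptyset\}|$, which is finite.

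The step I expect to be the main obstacle is the singleton piece of the trivial extension: a priori $B \in \mathcal{U}_Y$ could meet infinitely many singletons $\{y\}$ with $y \in Y \setminus \bigcup f(\mathcal{V}_X)$, which would sabotage the count. I anticipate two routes for handling this: either appeal to the coarse-surjectivity framing implicit in the paper's title (so $Y \setminus f(X)$ contributes nothing new beyond a uniformly bounded correction that can be absorbed into the pushed-forward cover), or, more carefully, replace the raw singletons by a uniformly bounded refinement of $\mathcal{U}_Y$ restricted to $Y \setminus f(X)$ and verify that this refinement inherits the required star-finiteness trivially from $\mathcal{U}_Y$ itself. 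With that bookkeeping in place, the pullback--pushforward argument above delivers the theorem.
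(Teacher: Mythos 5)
Your argument is essentially the paper's: pull $\mathcal{U}_Y$ back through the coarsely $n$-to-$1$ property, apply large scale weak paracompactness of $X$ to the resulting cover, push the witness forward by $f$, and bound the number of elements meeting a given $B\in\mathcal{U}_Y$ by $n\cdot m$ via the observation that any $V$ whose image meets $B$ must meet one of $B_1,\dots,B_n$. The singleton/non-surjectivity worry you flag is genuine but is not handled by the paper either --- it simply declares $f(\mathcal{W})$ to be a uniformly bounded cover of $Y$, implicitly assuming $f$ is (coarsely) surjective --- so your extra bookkeeping is a more careful rendering of the same proof rather than a different route.
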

\begin{proof}
Let $\mathcal{U}$ be a uniformly bounded cover of $Y$. Let $\mathcal{V}$ be a uniformly bounded cover of $X$ such that for every $U\in \mathcal{U}$ there exists $U_1,U_2,\hdots,U_n \in \mathcal{V}$ with $f^{-1}(U)= \bigcup_{i=1}^{n}U_i$. Let $\mathcal{W}$ be a uniformly bounded covering of $X$ for which every element of $\mathcal{U}$ intersects only finitely many elements of $\mathcal{W}$. Consider the uniformly bounded cover $f(\mathcal{W})$ of $Y$. We claim that each element of $\mathcal{V}$ intersects only finitely many elements of $f(\mathcal{W})$. To see this, let $V\in \mathcal{V}$. We have that $f^{-1}(V)\subset \bigcup_{i=1}^{n}V_i$ for $V_1,V_2, \hdots,V_n \in \mathcal{U}$. $V_i$ intersects at most finitely many, say at most $m$ for some $m \ge 1$, elements of $\mathcal{W}$ for each $i = 1,2,\hdots, n$. It follows that $\mathcal{U}$ intersects at most $n\cdot m$ elements of $f(\mathcal{U})$.
\end{proof}

\end{document}